\documentclass{amsart}
\usepackage[english]{babel}
\usepackage[latin1]{inputenc}
\usepackage[dvips,final]{graphics}
\usepackage{amsmath,amsfonts,amssymb,amsthm,amscd,array,stmaryrd,mathrsfs}
\usepackage{pstricks}
 \usepackage[all]{xy}
 \usepackage{url}
\usepackage{textcomp}
 \usepackage{graphicx}
\usepackage{color}              
\usepackage{epsfig}
\usepackage{psfrag}
\usepackage{epstopdf}
\vfuzz2pt 
\hfuzz2pt 
\setlength{\textwidth}{15truecm}
\setlength{\hoffset}{-1.5truecm}
\theoremstyle{plain}
\newtheorem{thm}{Theorem}
\newtheorem{lem}{Lemma}[subsection]
\newtheorem{cor}[lem]{Corollary}
\newtheorem{prop}[lem]{Proposition}

\theoremstyle{definition}
\newtheorem{defn}[lem]{Definition}
\newtheorem{rem}[lem]{Remark}
\newtheorem{ex}[lem]{Example}

\let\ssection=\section
\renewcommand{\section}{\setcounter{equation}{0}\ssection}



\newcommand{\Z}{\mathbb{Z}}

\newcommand{\Q}{\mathbb{Q}}


\newcommand{\A}{\mathcal{A}}

\newcommand{\Sc}{\mathcal{S}}



\newcommand{\SL}{\mathrm{SL}}



\hyphenation{para-metri-sation}

\begin{document}

\title[$\SL_2(\Z)$-tilings of the torus]{$\SL_2(\Z)$-tilings of the torus, Coxeter-Conway friezes 
and\\ Farey triangulations}

\author{Sophie Morier-Genoud, Valentin Ovsienko and Serge Tabachnikov}

\address{Sophie Morier-Genoud,
Sorbonne Universit\'es, UPMC Univ Paris 06, UMR 7586, Institut de Math\'ematiques de Jussieu- Paris Rive Gauche, Case 247, 4 place Jussieu, F-75005, Paris, France
}

\address{
Valentin Ovsienko,
CNRS,
Laboratoire de Math\'ematiques 
U.F.R. Sciences Exactes et Naturelles 
Moulin de la Housse - BP 1039 
51687 REIMS cedex 2,
France}

\address{
Serge Tabachnikov,
Pennsylvania State University,
Department of Mathematics,
University Park, PA 16802, USA
}

\email{sophie.morier-genoud@imj-prg.fr,
valentin.ovsienko@univ-reims.fr,
tabachni@math.psu.edu
}

\date{}

\keywords{Frieze pattern, $\SL_2$-tiling, Farey graph, Modular group}


\begin{abstract}
The notion of $\SL_2$-tiling is a generalization of that of classical Coxeter-Conway
frieze pattern.
We classify doubly antiperiodic $\SL_2$-tilings that
contain a rectangular domain of positive integers.
Every such
$\SL_2$-tiling corresponds to a pair of frieze patterns and a unimodular $2\times2$-matrix
with positive integer coefficients.
We relate this notion to triangulated $n$-gons in the Farey graph.
\end{abstract}

\maketitle

\thispagestyle{empty}


\section{Introduction}
Frieze patterns were introduced and studied by Coxeter and Conway,
 \cite{Cox,CoCo}, in the 70's.
A frieze pattern is an infinite array of numbers, bounding by two diagonals of 1's, 
such that every four adjacent numbers $a,b,c,d$
forming a ``small'' square
satisfy the relation $ad-bc=1$ called \textit{the unimodular rule}; 
 for an example see Figure \ref{exCoCox}.
The \textit{width} of the frieze is the number of diagonals between the bounding diagonals of $1$'s.

The fundamental Conway-Coxeter theorem \cite{CoCo} offers the following classification:
{\it frieze patterns with positive integer entries of width $n-3$, 
are in one-to-one correspondence 
with triangulations of a convex $n$-gon};
 for a simple proof see \cite{Hen}.
More precisely, given a triangulated $n$-gon in the oriented plane, one constructs a frieze of width $n-3$ as follows.
The diagonal next to the diagonal of $1$'s 
is formed by the numbers of triangles incident at each vertex (taken cyclically).
\begin{figure}[hbtp]
$
 \begin{array}{llllllllllllllllllllllll}
\ddots&&&&&&\\
&1&2 &3 &1 &1 &1 \\
&&1&2&1&2&3&1\\
&&&1&1&3&5&2&1\\
&&&&1&4&7&3&2&1\\
&&&&&1&2&1&1&1&1\\
&&&&&&1&1&2&3&4&1\\
&&&&&&&1&3 &5 &7 &2 &1\\
&&&&&&&&1&2 &3 &1 &1&1\\
&&&&&&&&&1&2&1&2&3&1\\
&&&&&&&&&&&&&&&\ddots\\
\end{array}
$
\includegraphics[width=4cm]{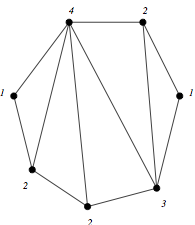}
\caption{A $7$-periodic frieze pattern and
the corresponding triangulated heptagon.}
\label{exCoCox}
\end{figure}

\noindent
This, in particular, implies that every diagonal in a frieze of width $n-3$ is $n$-periodic.
Throughout this paper, we will be considering frieze patterns with positive integer entries.

The following terminology is due to Conway and Coxeter \cite{CoCo}.
 A sequence of $n$ positive integers
 $q=(q_0,\ldots,q_{n-1})$
 is called a {\it quiddity} of order $n$,
if there exists a triangulated $n$-gon such that every~$q_i$ 
is equal to the number of incident triangles at $i$-th vertex.
For instance, the example in Figure~\ref{exCoCox} corresponds to the
following quiddities of order~$7$:
$(1,3,2,2,1,4,2),\,(3,2,2,1,4,2,1),\ldots$ (cyclic permutation).

Every quiddity of order $n$ determines a unique positive integer frieze pattern.
Two quiddities correspond to the same positive integer frieze pattern if and only if they
differ by a cyclic permutation.
According to the Conway-Coxeter theorem, positive integer frieze patterns can be enumerated
by the Catalan numbers.

 \begin{ex}
For each case $n=3,4$ and $5$, there is a unique (up to  cyclic permutation) quiddity:
$(1,1,1),\;(1,2,1,2)$ and $(1,3,1,2,2)$, respectively.

 For $n=6$, there are four different quiddities:
 $$
 (1,3,1,3,1,3),\quad(1,4,1,2,2,2),\quad(1,2,3,1,2,3),\quad(1,3,2,1,3,2)
 $$ 
 and their cyclic permutations.
 
We can also consider the ``degenerate'' case $n=2$, where
the corresponding ``degenerate'' quiddity is~$(0,0)$.

Examples of frieze patterns can be constructed using the computer program~\cite{Sch}.
\end{ex}

Among many beautiful properties of Coxeter-Conway friezes, the property of
periodicity and so-called Laurent phenomenon are particularly important.
They relate frieze patterns to the theory of cluster algebras 
developed by Fomin and Zelevinsky,
\cite{FZ1,FZ3}.

Various generalizations of  Coxeter-Conway friezes have been recently introduced and studied, 
see~\cite{CaCh, Pro, BaMa, ARS, MOT}.
One of the generalizations,
called $\SL_2$-{\it tiling}, was first considered by Assem, Reutenauer and Smith~\cite{ARS}, and further developed by Bergeron and Reutenauer~\cite{BeRe}. 
An $\SL_2$-tiling is an infinite array of numbers satisfying the above unimodular rule, 
without the condition of bounding diagonals of 1's.
Unlike the frieze patterns, $\SL_2$-tilings are not necessarily periodic.
Nevertheless,
correspondences between $\SL_2$-tilings and triangulations can be established, ~\cite{HJ,BHJ}.
\begin{figure}[hbtp]
$$
 \begin{array}{rrrrrrrrrrrrrrrrrrrrrr}
 &\vdots&\vdots&\vdots&\vdots&\vdots&\vdots&\vdots&\vdots&\vdots&\vdots&\\[4pt]
\cdots&2&5&8&11&3&-2&-5&-8&-11&-3&\cdots\\[4pt]
\cdots&7&18&29&40&11&-7&-18&-29&-40&-11&\cdots\\[4pt]
\cdots&5&13&21&29&8&-5&-13&-21&-29&-8&\cdots\\[4pt]
\cdots&3&8&13&18&5&-3&-8&-13&-18&-5&\cdots\\[4pt]
\cdots&-2&-5&-8&-11&-3&2&5&8&11&3&\cdots\\[4pt]
\cdots&-7&-18&-29&-40&-11&7&18&29&40&11&\cdots\\[4pt]
\cdots&-5&-13&-21&-29&-8&5&13&21&29&8&\cdots\\[4pt]
\cdots&-3&-8&-13&-18&-5&3&8&13&18&5&\cdots\\
 &\vdots&\vdots&\vdots&\vdots&\vdots&\vdots&\vdots&\vdots&\vdots&\vdots&
\end{array}
$$
\caption{A $(4,5)$-antiperiodic $\SL_2$-tiling with positive rectangular domain.}
\label{exTiling}
\end{figure}

The case of $(n,m)$-{\it antiperiodic, or ``toric''} $\SL_2$-tilings was suggested in~\cite{BeRe}.
In this paper, we study such tilings.

The main results of the paper are the following.

We classify doubly antiperiodic $\SL_2$-tilings that contain a 
rectangular fundamental domain of positive integers.
We show that every such $\SL_2$-tiling is generated by a pair of quiddities and a unimodular $2\times2$-matrix
with positive integer coefficients.
Although there are infinitely many such $\SL_2$-tilings, their description
is very explicit.

Following the original idea of Coxeter~\cite{Cox},
we also interpret the entries of a doubly periodic $\SL_2$-tiling
that contain a rectangular fundamental domain of positive integers in terms of 
the Farey graph of rational numbers.
Every such $\SL_2$-tiling corresponds to a triple:
an $n$-gon, an $m$-gon in the Farey graph,
and a totally positive matrix from $\SL_2(\Z)$
relating them.
We also obtain an explicit formula for the entries of the tiling.

\section{Farey graph and the Conway-Coxeter theorem}

In this section, we give an explanation of the relation between
the Coxeter frieze patterns and triangulated $n$-gons.

It was already noticed by Coxeter~\cite{Cox} that a Farey series 
(of arbitrary order $N$)
defines a frieze pattern.
Moreover, every frieze pattern corresponds to
an $n$-gon (i.e., an $n$-cycle) in the Farey graph.
A Farey $n$-gon always carries a triangulation;
we will prove that this triangulation is precisely that of Conway-Coxeter theorem.
This statement seems to be new and extend the observation illustrated in~\cite{Sch}.

\subsection{Farey graph, Farey series and Farey $n$-gons}
For two rational numbers, $v_1,v_2\in\Q$,
written as irreducible fractions $v_1=\frac{a_1}{b_1}$ and $v_2=\frac{a_2}{b_2}$,
the {\it Farey ``distance''} is defined by
$$
d(v_1,v_2):=|a_1b_2-a_2b_1|.
$$
Note that the above ``distance'' does not satisfy the triangle inequality.
Recall the definition of the {\it Farey graph}.

\begin{enumerate}
\item
The set of vertices of the Farey graph is $\Q\cup \{\infty\}$, 
with $\infty$ represented by $\frac{1}{0}$.

\item
Two vertices, $v_1,v_2$ are joined by a (non-oriented) edge $(v_1,v_2)$
whenever $d(v_1,v_2)=1$. 
\end{enumerate}

The Farey graph is often embedded into the hyperbolic half-plane,
the edges being realized as geodesics joining rational points on the ideal boundary.

The following classical properties of the Farey graph can be found in \cite{HaWr}
(the proof is  elementary).

\begin{prop}
\label{ClassProp}
(i)
Every $3$-cycle of the Farey graph is of the form
\begin{equation}
\label{TrianG}
\left\{
\frac{a_1}{b_1},\frac{a_1+a_2}{b_1+b_2},\frac{a_2}{b_2}
\right\}.
\end{equation}

(ii)
Every edge of the Farey graph belongs to a $3$-cycle.

(iii)
Edges in the Farey graph do not cross,
i.e., for a quadruple $v_1>v_2>v_3>v_4$ it is not possible to have edges $(v_1,v_3)$ and $(v_2,v_4)$.
\end{prop}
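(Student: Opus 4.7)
The plan is to prove parts (i), (ii), and (iii) separately. Throughout, I will use that $\PSL_2(\Z)$ acts on $\Q\cup\{\infty\}$ by Möbius transformations, preserving both the Farey distance (since elements of $\SL_2(\Z)$ have determinant $1$) and the cyclic order on the boundary circle (since the real derivative $(cz+d)^{-2}$ is positive).

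For (i), I will argue at the level of integer representatives. The relation $d(v_1,v_2)=1$ means that irreducible representatives $(a_1,b_1)^T$ and $(a_2,b_2)^T$ form a $\Z$-basis of $\Z^2$; writing $(a_3,b_3) = p(a_1,b_1)+q(a_2,b_2)$ and imposing $d(v_1,v_3) = d(v_2,v_3) = 1$ forces $p,q\in\{\pm 1\}$. The four sign combinations yield only two distinct fractions, and after replacing $(a_2,b_2)$ by its negative if necessary (which leaves $v_2$ unchanged) one obtains the canonical form of \eqref{TrianG}. For (ii), starting from an edge $(v_1,v_2)$, the mediant $(a_1+a_2)/(b_1+b_2)$ supplies the third vertex: the identity $a_1(b_1+b_2)-(a_1+a_2)b_1 = a_1b_2-a_2b_1 = \pm 1$ (with its analog at $v_2$) gives the two required new edges, and irreducibility of the mediant is automatic since any common factor would divide $\pm 1$.

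For (iii), the strategy is to normalize using the $\PSL_2(\Z)$ action: since this group acts transitively on oriented edges, I may assume $v_1=\infty$ and $v_3=0$, and preservation of the cyclic order on $\Q\cup\{\infty\}$ then places $v_2\in(0,\infty)$ and $v_4\in(-\infty,0)$. Writing $v_2=a/b$ and $v_4=-c/d$ with positive coprime integers, the Farey distance is $d(v_2,v_4) = |ad-b(-c)| = ad+bc \geq 2$, contradicting the assumption that $(v_2,v_4)$ is an edge. The main difficulty in the whole proof lies in this normalization step: one needs to be sure that the combination of distance-preservation and cyclic-order-preservation under $\PSL_2(\Z)$ genuinely reduces the general configuration to the one above; once this is clear, the crossing obstruction collapses to the one-line inequality $ad+bc\geq 2$.
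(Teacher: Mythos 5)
Your proof is correct, but note that the paper itself offers no proof of Proposition~\ref{ClassProp}: it simply cites Hardy--Wright \cite{HaWr} with the remark that the proof is elementary, so there is no internal argument to match yours against. Your parts (i) and (ii) follow the standard elementary route one finds in \cite{HaWr}: $d(v_1,v_2)=1$ means the primitive representatives form a $\Z$-basis, the two unimodularity constraints force the coordinates $p,q$ of a third vertex into $\{\pm1\}$, and the mediant (irreducible because a common factor would divide $a_1b_2-a_2b_1=\pm1$) supplies the $3$-cycle. Where you genuinely depart from the classical treatment is (iii): instead of the direct fraction-inequality argument (comparing $\frac{a}{b}-\frac{c}{d}\geq\frac{1}{bd}$ along the ordered quadruple), you normalize by the $\PSL_2(\Z)$-action, sending the edge $(v_1,v_3)$ to $(\infty,0)$ and reducing crossing to the one-line inequality $ad+bc\geq 2$. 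This is sound: the action is indeed transitive on edges (the matrix with columns the primitive representatives, sign-adjusted to determinant $1$, does the job), it preserves the Farey distance and primitivity, and orientation-preservation guarantees that $\{0,\infty\}$ still separates the images of $v_2$ and $v_4$. One cosmetic point you glide over: the normalization may land $\gamma v_2<0<\gamma v_4$ rather than the signs you assert; this is harmless, either by applying $z\mapsto -1/z$ (which is in $\PSL_2(\Z)$ and swaps $0,\infty$ while exchanging the half-lines) or simply because the inequality $ad+bc\geq2$ is symmetric in the two configurations, so it is not a gap. Your approach buys brevity and, pleasantly, is in the spirit of the rest of the paper, which repeatedly renormalizes Farey polygons by the $\SL_2(\Z)$-action (e.g.\ in the definition of cyclic equivalence); the classical argument buys self-containedness, needing nothing beyond arithmetic of fractions.
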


\begin{defn}
The {\it Farey series} (also called {\it Farey sequence})
of order $N$ is the sequence of irreducible fractions in $[0,1]$
whose denominators do not exceed $N$.
\end{defn}

We will write the sequences in the decreasing order; see Figure~\ref{SerFig}.
   \begin{figure}[!h]
\begin{center}
\includegraphics[width=9cm]{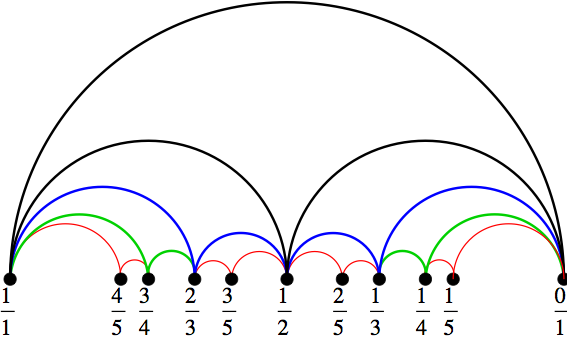}
\end{center}
\caption{The Farey series of order 5 embedded in the Farey graph}
\label{SerFig}
\end{figure}

The following fundamental property of Farey series is also proved in~\cite{HaWr}.
It shows that every Farey series is a cycle in the Farey graph.

\begin{prop}
\label{ClassPropBis}
Every two consecutive numbers in a Farey series are joined by an edge in the Farey graph.
\end{prop}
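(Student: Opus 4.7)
The plan is to proceed by induction on the order $N$ of the Farey series. The base case $N=1$ reduces to checking $d(\frac{1}{1},\frac{0}{1})=1$, which is immediate from the definition of the Farey distance.

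For the inductive step, I would assume the property holds for $F_{N-1}$ and analyze the passage to $F_N$, which consists in inserting the new fractions $\frac{a}{N}$ with $\gcd(a,N)=1$ at appropriate positions among the terms of $F_{N-1}$. The crucial claim to establish is the \emph{mediant property}: if a new fraction $\frac{a}{N}$ is inserted between two consecutive terms $\frac{a_1}{b_1}$ and $\frac{a_2}{b_2}$ of $F_{N-1}$ (say with $\frac{a_1}{b_1}<\frac{a_2}{b_2}$), then $a=a_1+a_2$ and $N=b_1+b_2$. Once this is in hand, the Farey distance to either neighbor is immediate: $|a_1(b_1+b_2)-(a_1+a_2)b_1|=|a_1b_2-a_2b_1|=1$ by the inductive hypothesis, and symmetrically for the other side.

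To prove the mediant property, I would set $p:=ab_1-a_1N$ and $q:=a_2N-ab_2$; these are positive integers since $\frac{a}{N}$ strictly separates its neighbors. Using the inductive relation $a_2b_1-a_1b_2=1$, a short direct computation yields the two identities $pb_2+qb_1=N$ and $pa_2+qa_1=a$. Since $p,q\geq 1$, the first identity already forces $b_1+b_2\leq N$. On the other hand, if $b_1+b_2\leq N-1$ held, then the reduced form of the mediant $\frac{a_1+a_2}{b_1+b_2}$ would have denominator at most $N-1$, and since the mediant strictly separates its parents it would already belong to $F_{N-1}$, contradicting the consecutivity assumption. Hence $b_1+b_2=N$, and then $(p-1)b_2+(q-1)b_1=0$ with $p,q\geq 1$ forces $p=q=1$, giving $a=a_1+a_2$ as claimed.

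The main obstacle is exactly this mediant identification; everything else is bookkeeping. A small remark I would add is that pairs of terms of $F_N$ that were already adjacent in $F_{N-1}$ with no new fraction inserted between them retain Farey distance $1$ by the inductive hypothesis itself, so no separate argument is needed in that case.
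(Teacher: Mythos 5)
Your proof is correct, but it is a genuinely different argument from the paper's. The paper proves Proposition~\ref{ClassPropBis} directly, by contradiction: if consecutive terms $\frac{a}{b}>\frac{c}{d}$ had $ad-bc\geq 2$, the lattice triangle with vertices $(0,0)$, $(a,b)$, $(c,d)$ would have area at least $1$, and Pick's formula would produce an extra lattice point $(x,y)$ with $y\leq\max(b,d)\leq N$, i.e.\ a fraction strictly between the two terms admissible in the series --- a contradiction. (The authors explicitly say they avoid ``the well-known'' proof; what you wrote is essentially that classical Hardy--Wright induction.) Your route inducts on $N$ and identifies each inserted fraction $\frac{a}{N}$ as the mediant of its neighbors via the identities $pb_2+qb_1=N$, $pa_2+qa_1=a$ with $p=ab_1-a_1N$, $q=a_2N-ab_2$, then forces $p=q=1$; this is complete and sound, and note that your mediant identification even gives for free that at most one new fraction lands in each gap (any fraction with denominator $N$ strictly between consecutive terms of the order-$(N-1)$ series must \emph{equal} the mediant), so no two new fractions can be adjacent --- a point you leave implicit but which follows immediately. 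As for what each approach buys: the Pick argument is shorter, non-inductive, and self-contained; your induction yields more structure along the way, since the triple $\bigl(\frac{a_1}{b_1},\frac{a_1+a_2}{b_1+b_2},\frac{a_2}{b_2}\bigr)$ is exactly the form~(\ref{TrianG}) of a $3$-cycle, so your proof essentially contains the paper's subsequent Corollary~\ref{TriangCor} (that every Farey series is a triangulated polygon), which the paper must prove by a separate induction anyway.
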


This is less elementary than Proposition~\ref{ClassProp}, so we propose here a short proof.
Our proof is different from the well-known one,
it is based on the classical Pick formula.

\begin{proof}
Consider two consecutive numbers $\frac{a}{b}>\frac{c}{d}$, in a Farey series of some order $N$. 
Suppose that $ad -bc\geq 2$. 
The quantity $A=\frac12(ad-bc)$ is the area of the Euclidean triangle spanned by the vertices
$(0,0)$, $(a,b)$, $(c,d)$.
Pick's formula states:
$$
A=I+\frac{B}{2}-1,
$$
where $I$ is the number of integer points in the interior of the triangle, 
and $B$ the number of integer points on the border.
By assumption,~$A\geq 1$, and therefore $I+\frac{B}{2}\geq2$.
It follows that there exists a point $(x,y)$,
which is either inside the triangle, or on the segment between $(a,b)$ and $(c,d)$
(since the fractions $\frac{a}{b}$ and $\frac{c}{d}$ are irreducible). 
One then has: 
$$
y\leq \max(b,d)\leq N
\qquad\hbox{and}\qquad
\frac{a}{b}>\frac{x}{y}>\frac{c}{d}.
$$
This contradicts the assumption that $\frac{a}{b}$ and $\frac{c}{d}$ are
consecutive numbers in the Farey series.
\begin{figure}[!h]
\begin{center}
\includegraphics[width=5cm]{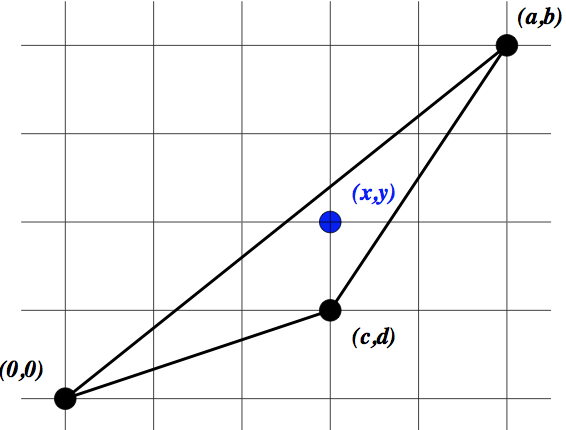}
\end{center}
\caption{The case of interior point}
\label{PickFig}
\end{figure}
\end{proof}

Proposition~\ref{ClassPropBis} is used three times to prove the following.

\begin{cor}
\label{TriangCor}
Every Farey series forms a triangulated polygon in the Farey graph.
\end{cor}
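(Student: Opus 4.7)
The plan is to proceed by induction on the order $N$, enlarging the Farey polygon one Farey order at a time. The base case $N=2$ is immediate: $F_2=\{1/1,1/2,0/1\}$ is already a triangle in the Farey graph.

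For the inductive step, assume $F_{N-1}$ forms a triangulated polygon in the Farey graph. The vertices of $F_N\setminus F_{N-1}$ are exactly the fractions $\frac{a}{N}$ with $\gcd(a,N)=1$. I would first dispose of the standard arithmetic fact that between any two consecutive members $\frac{p}{q}>\frac{r}{s}$ of $F_{N-1}$, the series $F_N$ inserts a new vertex if and only if $q+s=N$, in which case the inserted vertex is the mediant $\frac{p+r}{q+s}$. This rests on the identity $ps-qr=1$ for consecutive Farey fractions (a consequence of Proposition~\ref{ClassPropBis}) combined with the minimality of the mediant denominator among rationals strictly between $\frac{r}{s}$ and $\frac{p}{q}$.

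With this in hand, Proposition~\ref{ClassPropBis} is used three times to produce a new Farey triangle for each such pair: once on $F_{N-1}$, giving that $(\tfrac{p}{q},\tfrac{r}{s})$ is a Farey edge; then twice on $F_N$, giving that $(\tfrac{p}{q},\tfrac{p+r}{N})$ and $(\tfrac{p+r}{N},\tfrac{r}{s})$ are Farey edges. Together these close up into a Farey triangle with all three vertices in $F_N$.

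To finish, I would build the triangulation of the $F_N$-polygon from that of $F_{N-1}$ by, for each pair $\frac{p}{q}>\frac{r}{s}$ consecutive in $F_{N-1}$ with $q+s=N$, demoting the old boundary edge $(\tfrac{p}{q},\tfrac{r}{s})$ to a diagonal and adjoining the freshly constructed Farey triangle on its outward side; the interior triangles of the $F_{N-1}$-triangulation survive unchanged, and no new edges cross by Proposition~\ref{ClassProp}(iii). The main obstacle is the arithmetic lemma identifying the inserted vertex as the mediant with denominator exactly $N$; once that is established, the rest is straightforward bookkeeping on how the polygon inflates under the transition $F_{N-1}\rightsquigarrow F_N$.
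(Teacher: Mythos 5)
Your proposal is correct and is essentially the paper's own proof: induction on the order $N$, with each newly inserted vertex flanked by two consecutive vertices of the order-$(N-1)$ series, and Proposition~\ref{ClassPropBis} invoked three times to supply the three edges of the new triangle. The only deviation is local: where you identify the inserted vertex as the mediant (inserted iff $q+s=N$), the paper obtains the flanking property more cheaply from the observation that two new vertices $\frac{k_1}{N}$, $\frac{k_2}{N}$ cannot be consecutive, since $d\bigl(\frac{k_1}{N},\frac{k_2}{N}\bigr)=N\,|k_1-k_2|\neq 1$ would contradict Proposition~\ref{ClassPropBis} --- your mediant lemma is correct but strictly stronger than what the argument requires.
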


\begin{proof}
We prove this statement by induction on $N$ (the order of Farey series).
Assume that the series of order~$N-1$ is triangulated.
The series of order $N$ is obtained from that of order $N-1$ by adding
points of the form $\frac{k}{N}$.

First, we observe that two points, $\frac{k_1}{N}$
and $\frac{k_2}{N}$ cannot be consecutive.
Indeed, $d(\frac{k_1}{N},\frac{k_2}{N})\not=1$: that would contradict Proposition~\ref{ClassPropBis};
therefore, every new point $\frac{k}{N}$ appears between two ``old'' points:
\begin{equation}
\label{Tri}
\frac{p_1}{q_1}>\frac{k}{N}>\frac{p_2}{q_2}.
\end{equation}
Second, by Proposition~\ref{ClassPropBis}, $\frac{k}{N}$ is joined by edges with 
$\frac{p_1}{q_1}$ and $\frac{p_2}{q_2}$.
Third, $\frac{p_1}{q_1}$ and $\frac{p_2}{q_2}$
are joined by an edge, according to Proposition~\ref{ClassPropBis}
applied to the series of order $N-1$.
We conclude that~(\ref{Tri}) is a triangle.
\end{proof}

We will be interested in $n$-cycles (or ``$n$-gons'') in the Farey graph
that are more general than Farey series.

\begin{defn}
\begin{enumerate}
\item
An $n$-{\it gon} in the Farey graph, or a \textit{Farey $n$-gon} is
a decreasing sequence of rationals $(v_0,\ldots,v_{n-1})$:
$$
\infty\geq{}v_0>v_1>\ldots>{}v_{n-1}\geq0,
$$ 
such that every pair of consecutive numbers $v_i,v_{i+1}$,
as well as $v_{n-1},v_0$, are joined by an edge. 

\item
The $n$-gon is called  \textit{normalized} if $v_0=\infty$ and $v_{n-1}=0$.
\end{enumerate}
\end{defn}

Since every $n$-gon can be embedded in a Farey series, Corollary \ref{TriangCor} implies the following.

\begin{cor}
Every Farey $n$-gon is triangulated.
\end{cor}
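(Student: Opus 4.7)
The plan is to reduce to Corollary~\ref{TriangCor} via the embedding suggested just before the statement. Given a Farey $n$-gon $(v_0,\ldots,v_{n-1})$, I would first invoke the natural action of $\PSL_2(\Z)$ on $\Q\cup\{\infty\}$ by fractional linear transformations. This action preserves the Farey distance $d(v_1,v_2)=|a_1b_2-a_2b_1|$, because a unimodular change of basis preserves this determinant; in particular it permutes the edges of the Farey graph and is transitive on them. Applying a suitable element, I may assume the $n$-gon lies entirely in $[0,1]$, for example by mapping the edge $(v_{n-1},v_0)$ onto the Farey edge $(0,1)$ so that all remaining $v_i$ land on the appropriate side.

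Next, writing each $v_i=a_i/b_i$ in lowest terms and setting $N=\max_i b_i$, every $v_i$ becomes a vertex of the Farey series of order $N$. The $n$-gon thus sits as a sub-polygon of this series polygon, which, by Corollary~\ref{TriangCor}, is already triangulated by edges of the Farey graph.

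It then remains to restrict this triangulation to the $n$-gon. By Proposition~\ref{ClassProp}(iii), Farey edges do not cross, so no triangulating diagonal of the series polygon can cross a side of the $n$-gon. Each such diagonal therefore lies either fully inside or fully outside the $n$-gon, and the ones lying inside cut the $n$-gon into sub-pieces whose sides are Farey edges.

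The hard part will be this last restriction step: one must verify that the induced subdivision of the $n$-gon actually consists of triangles rather than more general Farey sub-polygons. This will follow because every minimal cell of the series triangulation is itself a Farey triangle, and the non-crossing property of Proposition~\ref{ClassProp}(iii) forces each such triangle to be either contained in the $n$-gon or disjoint from its interior; the inside triangles of the series triangulation therefore tile the $n$-gon exactly, giving the desired triangulation.
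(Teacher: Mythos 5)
Your proof is correct and takes essentially the same route as the paper's: embed the $n$-gon into a Farey series (after an $\SL_2(\Z)$-normalization into $[0,1]$), invoke Corollary~\ref{TriangCor}, and restrict the series triangulation to the $n$-gon via the non-crossing property of Proposition~\ref{ClassProp}. The paper compresses all of this into the single remark that the $n$-gon is ``obtained by cutting along diagonals of the triangulation,'' so your normalization and restriction steps simply supply the details it leaves implicit.
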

\noindent
We thus can speak of the {\it quiddity of a Farey $n$-gon}.

\begin{proof}
A Farey $n$-gon is obtained from a Farey series which is a triangulated
polygon, by cutting along diagonals of the triangulation.
\end{proof}

We define the notion of {\it cyclic equivalence}
of Farey $n$-gons.
Given an $n$-gon $(v_0,\ldots,v_{n-1})$, consider the $n$-cycle
$(v_1,\ldots,v_{n-1},v_0)$, and renormalize it using the $\SL_2(\Z)$-action so that
$v_1=\infty$ and $v_0=0$.
The obtained $n$-gon is called cyclically equivalent to the given one.
For an example, see Figure~\ref{ExFig}.

   \begin{figure}[!h]
\begin{center}
\includegraphics[width=14cm]{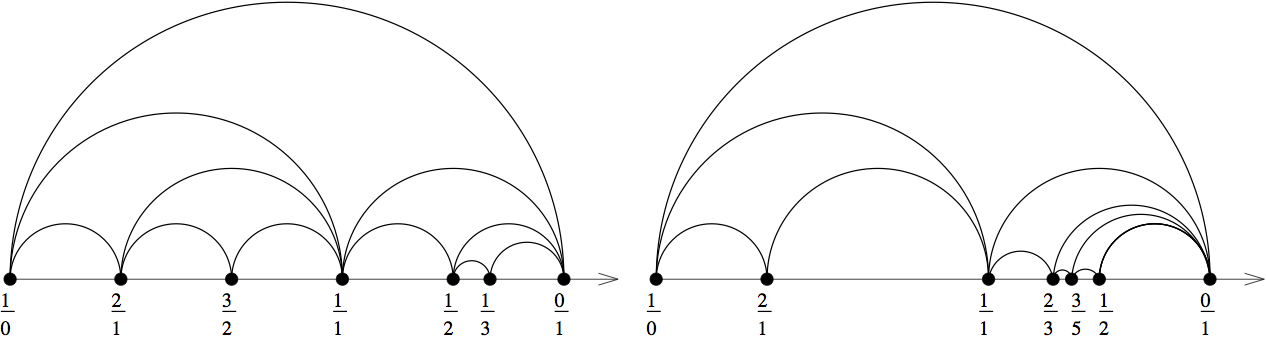}
\end{center}
\caption{Two cyclically equivalent normalized heptagons in the Farey graph corresponding to the frieze of 
Figure~\ref{exCoCox}}
\label{ExFig}
\end{figure}

\subsection{Farey $n$-gons and Coxeter-Conway friezes}
Proposition~\ref{ClassPropBis} leads to the following observation due to Coxeter~\cite{Cox}:
{\it every Farey series gives rise to a Coxeter-Conway frieze pattern of positive integers.}
Along the same lines, we have the following strengthened statement.

\begin{prop}
The Coxeter-Conway frieze patterns of positive integers of width $n-3$ are in one-to-one correspondence with the
normalized Farey $n$-gons, up to cyclic equivalence.
\end{prop}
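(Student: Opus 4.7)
The plan is to factor the correspondence through triangulated $n$-gons and then invoke the Conway-Coxeter theorem. By that theorem, positive-integer friezes of width $n-3$ are in bijection with quiddities of order $n$ modulo cyclic permutation, equivalently with triangulations of an abstract convex $n$-gon up to cyclic relabeling of its vertices. It therefore suffices to produce a bijection between normalized Farey $n$-gons modulo cyclic equivalence and such triangulated $n$-gons.

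The forward map is immediate from the corollary that every Farey $n$-gon is triangulated: a normalized Farey $n$-gon $(v_0=\infty, v_1, \ldots, v_{n-1}=0)$ inherits a triangulation, which, after labelling its vertices $0, \ldots, n-1$ in order, becomes a triangulation of the abstract $n$-gon. The $\SL_2(\Z)$-renormalization used in the definition of cyclic equivalence preserves the Farey graph and hence the triangulation structure, so cyclic equivalence descends exactly to cyclic relabeling of the abstract triangulation.

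For the inverse, given an abstract triangulation $T$, I construct a normalized Farey $n$-gon realizing it recursively. Place $v_0 := 1/0$ and $v_{n-1} := 0/1$, which form a Farey edge. The unique triangle $\{0, k, n-1\}$ of $T$ incident to the outer edge must map to a Farey $3$-cycle; by Proposition \ref{ClassProp}(i), the only rational strictly between $\infty$ and $0$ forming a $3$-cycle with them is the mediant $1/1$, so one is forced to set $v_k := 1/1$. The edges $(0,k)$ and $(k, n-1)$ of $T$ each bound a smaller triangulated sub-polygon, to which the rule applies recursively: every newly introduced vertex is the mediant of the two already-placed endpoints of the Farey edge being subdivided. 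Since mediants lie strictly between their parents, the sequence $v_0 > v_1 > \cdots > v_{n-1}$ is automatically strictly decreasing; every edge of $T$ has Farey distance $1$ by the mediant formula; and non-crossing of the edges in the Farey graph is guaranteed by Proposition \ref{ClassProp}(iii). By uniqueness of the mediant, this is the only normalized Farey realization of $T$, so the two maps are mutually inverse.

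The main obstacle is the uniqueness-of-mediant claim driving the induction: given a Farey edge $(a/b, c/d)$ with $a/b > c/d$, the mediant $(a+c)/(b+d)$ is the \emph{unique} rational strictly between $a/b$ and $c/d$ completing a Farey $3$-cycle with them. This follows from Proposition \ref{ClassProp}(i) once one checks that the other rational satisfying the required linear system (obtained by reversing signs) falls outside the interval $[c/d, a/b]$. With that in hand the bijection is established, and the Conway-Coxeter theorem closes the proof.
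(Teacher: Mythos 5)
Your proof is correct, but it takes a genuinely different route from the paper's. The paper's proof is essentially a two-line explicit bijection: given a frieze with entries $c_{i,j}$, take two consecutive rows and form their ratios, $v_i=c_{1,i}/c_{2,i}$; the unimodular rule for adjacent $2\times2$ blocks of the frieze is \emph{literally} the Farey condition $d(v_i,v_{i+1})=1$, and the bounding rows $(1,a_1,\ldots,1,0)$ and $(0,1,\ldots,b_{n-2},1)$ correspond exactly to normalization $v_0=\frac10$, $v_{n-1}=\frac01$. No triangulations and no Conway--Coxeter theorem enter at this stage. You instead factor through triangulated $n$-gons, invoking Conway--Coxeter on the frieze side and, on the Farey side, building a mediant-based recursive realization of an abstract triangulation (essentially the Stern--Brocot construction); your uniqueness step is sound, since the two rationals completing a Farey edge $(\frac ab,\frac cd)$ to a $3$-cycle are the mediant $\frac{a+c}{b+d}$ and $\frac{a-c}{b-d}$, and only the mediant lies in the open interval, and your remark that the $\SL_2(\Z)$-renormalization preserves Farey distances correctly handles the descent to cyclic equivalence classes. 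Two caveats are worth recording. First, within this paper your route is circular in spirit: the authors prove this proposition directly precisely so that, combined with Theorem~\ref{fareyquid}, it furnishes an \emph{alternative proof} of the Conway--Coxeter theorem; a proof of the proposition that presupposes Conway--Coxeter cannot serve that purpose, although it remains a valid proof of the statement per se, since Conway--Coxeter has independent proofs (e.g.\ \cite{Hen}). Second, your bijection is a priori only abstract, whereas the paper's explicit row-ratio correspondence is what powers the later developments (formula~(\ref{Frizaij}) and Theorem~\ref{theprop}); verifying that your mediant reconstruction produces the same polygon as the row ratios is essentially the content of Theorem~\ref{fareyquid}. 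What your approach buys in exchange is a clean reconstruction lemma -- every abstract triangulated $n$-gon is realized by a unique normalized Farey $n$-gon -- which the paper never states explicitly and which makes surjectivity of the correspondence transparent.
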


\begin{proof}
The correspondence is given by considering the ratios of 
two consecutive rows of the frieze patterns.
The sequence
$$
v_0=\frac{1}{0},\quad
v_1=\frac{a_1}{1},\quad\ldots,\quad
v_i=\frac{a_{i}}{b_{i}},\quad\ldots,\quad
v_{n-2}=\frac{1}{b_{n-2}},\quad
v_{n-1}=\frac{0}{1}
$$
corresponds to the frieze determined by the rows
$$
\begin{array}{ccccccc}
1&a_1&a_2&\cdots&a_{n-3}&1&0\\[4pt]
0&1&b_2&&\cdots&b_{n-2}&1\\
\end{array}
$$
and {\it vice versa}.
\end{proof}

The Conway-Coxeter theorem mentioned in the introduction
provides a relation between frieze patterns and triangulations.
The following result somewhat ``demystifies'' this relation
and provides an alternative proof of the Conway-Coxeter theorem.

\begin{thm}
\label{fareyquid}
The quiddity of a Farey $n$-gon coincides with the quiddity of the 
corresponding Coxeter-Conway frieze pattern.
\end{thm}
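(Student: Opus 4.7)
The plan is to characterise both quiddities via a common three-term linear relation on the numerators $a_i$ and denominators $b_i$ of the Farey $n$-gon, and then invoke coprimality to force equality.

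\emph{Frieze side.} The two rows $(a_i)$ and $(b_i)$ appearing in the proof of the preceding proposition are consecutive rows of the Coxeter--Conway frieze. Applying the unimodular rule to two adjacent diamonds and eliminating the entry in the row immediately above yields, for each internal index $i$, the identities
$$
a_{i-1}+a_{i+1}=c_i\,a_i, \qquad b_{i-1}+b_{i+1}=c_i\,b_i,
$$
where $c_i$ is the frieze quiddity at position~$i$. Using the cyclic equivalence from the preceding subsection, any vertex may be placed at an internal position, so these identities characterise $c_i$ at every vertex.

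\emph{Farey side.} I would show by induction on $n$ that the same two identities hold with $c_i$ replaced by the Farey quiddity $c_i^F$, i.e.\ the number of triangles incident at~$v_i$. The base case $n=3$ is immediate from Proposition~\ref{ClassProp}(i): the linear-order middle vertex of a Farey triangle is the mediant of the other two, giving $a_0+a_2=a_1$ and $b_0+b_2=b_1$, in agreement with $c_1^F=1$. For the inductive step, pick an ear $v_j$ (a vertex with $c_j^F=1$); by cyclic equivalence we may assume $j\notin\{0,n-1\}$. Since $(v_{j-1},v_j,v_{j+1})$ is a Farey $3$-cycle with $v_j$ as its middle vertex, the mediant property gives $a_j=a_{j-1}+a_{j+1}$ and $b_j=b_{j-1}+b_{j+1}$. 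Removing $v_j$ yields a triangulated Farey $(n-1)$-gon whose quiddities are $c_i^F$ for $i\notin\{j-1,j,j+1\}$ and $c_{j\pm1}^F-1$ at the two neighbours. By induction the recurrence holds on this smaller polygon; for $i$ away from $j$ it transfers back verbatim, at $i=j$ it is exactly the mediant relation, and at $i=j-1$ one combines the inductive identity $a_{j-2}+a_{j+1}=(c_{j-1}^F-1)\,a_{j-1}$ with $a_{j+1}=a_j-a_{j-1}$ to recover $a_{j-2}+a_j=c_{j-1}^F\,a_{j-1}$; the case $i=j+1$ is symmetric.

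\emph{Conclusion.} Since $\gcd(a_i,b_i)=1$, the positive integer $c$ satisfying $a_{i-1}+a_{i+1}=c\,a_i$ and $b_{i-1}+b_{i+1}=c\,b_i$ is uniquely determined, whence $c_i=c_i^F$ at every vertex. The step I expect to be the main obstacle is making rigorous the three-term identity on the frieze side — it requires a careful derivation from the unimodular rule and the correct embedding of the two seed rows into the full bi-periodic frieze — together with the systematic use of cyclic equivalence both to place the chosen ear at an internal position and to reduce boundary indices to the internal case.
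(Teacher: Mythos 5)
Your proposal is correct in substance, but it takes a genuinely different route from the paper's proof. The paper argues via Coxeter's explicit formula (5.6): the quiddity entry is $c_{i,i}=c_{1,i-2}c_{2,i}-c_{1,i}c_{2,i-2}=d(v_{i-2},v_i)$, and then Lemma~\ref{DistLem} computes the Farey distance $d(v_{i-1},v_{i+1})$ in closed form by expressing $v_{i-1}$ and $v_{i+1}$ as iterated mediants over the star of $v_i$, obtaining $k+\ell-1$ directly --- no induction on $n$ and no recurrence. You instead characterise both quiddities as the coefficient of the three-term relation (the discrete Hill equation \eqref{Sro}) satisfied simultaneously by the numerators $(a_i)$ and denominators $(b_i)$, proving the Farey-side relation by ear-induction and concluding by uniqueness of the coefficient since $(a_i,b_i)\neq(0,0)$. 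The frieze-side identity you flag as the main obstacle is exactly the classical fact the paper records as Proposition~\ref{Known} (rows of a frieze, extended by antiperiodicity, solve \eqref{Sro} with the quiddity as coefficients; Coxeter's (5.6), Conway--Coxeter property (17)); it appears later in the paper but is logically independent of Theorem~\ref{fareyquid}, so citing it involves no circularity. What each approach buys: the paper's computation yields the stronger closed formula \eqref{Frizaij}, $c_{i,j}=d(v_{i-2},v_j)$ for \emph{all} entries, which is reused in the proof of Theorem~\ref{theprop}, whereas your argument recovers only the diagonal; on the other hand your route is more elementary (mediants and ears, no explicit star computation) and meshes naturally with the difference-equation viewpoint of the paper's Section~5.

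Two bookkeeping points deserve explicit treatment. First, inside the induction you can dispense with cyclic renormalisation altogether: for $n\geq 4$ two ears of a triangulated polygon are never adjacent (the side they would share lies in a unique triangle of the triangulation, forcing $n=3$), so an internal ear $v_j$, $j\notin\{0,n-1\}$, always exists, and removing it preserves the normalisation $v_0=\infty$, $v_{n-1}=0$ and the nonnegative lowest-term representatives --- this avoids the sign issues that renormalisation through $\infty$ introduces. Second, for the wrap-around indices $i\in\{0,n-1\}$ you do need the compatibility of the correspondence with cyclic equivalence: the shifted polygon corresponds to taking rows $2$ and $3$ of the same frieze, i.e.\ to acting by the matrix $\begin{pmatrix}0&1\\-1&c_{1,1}\end{pmatrix}\in\SL_2(\Z)$, which sends $v_1$ to $\infty$; alternatively one can use the antiperiodic extension $a_{i+n}=-a_i$, $b_{i+n}=-b_i$ at these two indices. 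Either fix is routine (and the paper's own Lemma~\ref{DistLem} is equally silent on the wrap-around), but one of them must be said. Your uniqueness step is fine as stated; coprimality is more than is needed, since $(a_i,b_i)\neq(0,0)$ already determines the common coefficient.
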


\begin{proof}
Consider a frieze pattern, and
denote by $c_{i,j}$ its entries:
$$
\begin{array}{lllllllll}
0&1&c_{1,1}&c_{1,2}&\cdots&c_{1,n-3}&1&0\\[6pt]
&0&1&c_{2,2}&&\cdots&c_{2,n-2}&1\\[4pt]
&&\ddots&\ddots&&&&\ddots
\end{array}
$$
where 
$$
\left\{
\begin{array}{rl}
c_{i,j}=1, & i-j=1\; \hbox{or}\; 3-n,\\[4pt]
c_{i,j}=0, & i-j=2\; \hbox{or}\; 2-n.
\end{array}
\right.
$$
The quiddity of the frieze pattern reads in the $n$-periodic line $(c_{i,i})$.

Clearly, two consecutive rows determine the rest of the frieze;
the following formula was proved in~\cite{Cox}, formula~(5.6):
$$
c_{i,j}= c_{1,i-2}c_{2,j}-c_{1,j}c_{2,i-2}.
$$
In particular, we have:
\begin{eqnarray}
\label{cij}
c_{i,i}= c_{1,i-2}c_{2,i}-c_{1,i}c_{2,i-2}.
\end{eqnarray}

The corresponding  Farey $n$-gon has the following vertices 
$$
v_0=\frac{1}{0},\quad
v_1=\frac{c_{1,1}}{1},\quad\ldots\quad
v_i=\frac{c_{1,i}}{c_{2,i}},\quad\ldots\quad
v_{n-2}=\frac{1}{c_{2,n-2}},\quad
v_{n-1}=\frac{0}{1}.
$$
Therefore, the expression~(\ref{cij}) reads: $c_{i,i}=d(v_{i-2},v_i)$.
It remains to calculate the Farey distance between pairs of vertices
$v_{i-2}$ and $v_i$ in a Farey $n$-gon.

\begin{lem}
\label{DistLem}
Given a (triangulated) Farey $n$-gon
$$
v_0=\frac{1}{0},\quad
v_1=\frac{a_1}{1},\quad\ldots,\quad
v_i=\frac{a_{i}}{b_{i}},\quad\ldots,\quad
v_{n-2}=\frac{1}{b_{n-2}},\quad
v_{n-1}=\frac{0}{1},
$$
the Farey distance $d(v_{i-1},v_{i+1})$ coincides with the number of
triangles incident at $v_i$.
\end{lem}

\begin{proof}
Among all the vertices of the $n$-gon $(v_i)$, let us select those connected to $v_i$
by edges of the Farey graph. 
Denote by $\{v_{i_1},\ldots,v_{i_k}\}$, resp. $\{v_{i_{k+1}},\ldots,v_{i_{k+\ell}}\}$
the vertices at the left, resp. right, of $v_i$, so that 
$$
v_{i_1}> \ldots >v_{i_k}>v_i>v_{i_{k+1}}>\ldots>v_{i_{k+\ell}},
$$
(note that $v_{i_k}=v_{i-1}$ and $v_{i_{k+1}}=v_{i+1}$).
The number of triangles incident at $v_i$ is then equal to~$k+\ell-1$.

Two consecutive selected vertices, $v_{i_j}$ and $v_{i_{j+1}}$ are
connected by an edge.
Indeed, this follows from the fact that every Farey polygon is triangulated.
Therefore, the vertices $(v_{i_j}, v_{i_{j+1}}, v_i)$ form a triangle (a $3$-cycle) in the Farey graph.
Using Eq.~(\ref{TrianG}), we obtain by induction:
$$
v_{i-1}(=v_{i_k})=\dfrac{a_{i_1}+(k-1)a_i}{b_{i_1}+(k-1)b_i}, \qquad 
v_{i+1}(=v_{i_{k+1}})=\dfrac{a_{i_{k+\ell}}+(\ell-1)a_i}{b_{i_{k+\ell}}+(\ell-1)b_i}.
$$
We have:
$$
d(v_{i-1},v_{i+1})=
a_{i_1}b_{i_{k+\ell}}-b_{i_1}a_{i_{k+\ell}}+(k-1)(a_ib_{i_{k+\ell}}-b_ia_{i_{k+\ell}})+
(\ell-1)(a_{i_1}b_{i}-b_{i_{1}}a_{i_{}}).
$$

By assumption, $v_i$ is joined by edges with $v_{i_{1}}$ and $v_{i_{k+\ell}}$, hence
$a_ib_{i_{k+\ell}}-b_ia_{i_{k+\ell}}=1$, and $a_{i_1}b_{i}-b_{i_{1}}a_{i_{}}=1$.
Furthermore, $(v_{i_1}, v_i, v_{i_{k+\ell}})$ is also a triangle, therefore
$a_{i_1}b_{i_{k+\ell}}-b_{i_1}a_{i_{k+\ell}}=1$.
We have finally:
\begin{equation}
\label{DisTEq}
d(v_{i-1},v_{i+1})=k+\ell-1.
\end{equation}
Hence the lemma.
\end{proof}

Theorem~\ref{fareyquid} is proved.
\end{proof}

\subsection{Entries of the frieze pattern}
Coxeter's formula~(5.6) in \cite{Cox} for the entries of the frieze pattern
translates into our language as the following general expression:
\begin{equation}
\label{Frizaij}
c_{i,j}=d(v_{i-2},v_j),
\end{equation}
where, as above, $(v_i)$ is the Farey $n$-gon corresponding to the frieze pattern.

\section{$\SL_2$-tilings}

In this section, we introduce the main notions
studied in this paper.

\subsection{Tame $\SL_2$-tilings}
Let us first recall the notion of $\SL_2$-tiling introduced in \cite{BeRe}. 

 \begin{enumerate}
\item An $\SL_2$-tiling, is an infinite matrix $\A=(a_{i,j})_{(i,j)\in\Z\times \Z}$, 
such that every adjacent $2\times 2$-minor equals~$1$:
$$
\begin{vmatrix}
a_{i,j}&a_{i,j+1}\\
a_{i+1,j}&a_{i+1,j+1}
\end{vmatrix}=1,
$$
for all $(i,j)\in\Z\times \Z$.
\item 
The tiling is called \textit{tame} if  every adjacent $3\times 3$-minor equals~$0$:
$$
\begin{vmatrix}
a_{i,j}&a_{i,j+1}&a_{i,j+2}\\
a_{i+1,j}&a_{i+1,j+1}&a_{i+1,j+2}\\
a_{i+2,j}&a_{i+2,j+1}&a_{i+2,j+2}\\
\end{vmatrix}=0,
$$
for all $(i,j)\in\Z\times \Z$.
 \end{enumerate}
 
 Let us stress on the fact that a {\it generic} $\SL_2$-tiling is tame.
 
\subsection{Antiperiodicity}
The following condition was also suggested in~\cite{BeRe}.
 
 An $\SL_2$-tiling is called $(n,m)$-{\it antiperiodic} if every row is 
 $n$-antiperiodic, and every column is $m$-antiperiodic:
$$
\begin{array}{rcl}
 a_{i,j+n}&=&-a_{i,j}\;,\\[4pt] 
a_{i+m,j}&=&-a_{i,j}\;,
\end{array}
$$
for all $(i,j)\in\Z\times \Z$.

The following relation between $(n,m)$-antiperiodic $\SL_2$-tilings
and the classical Coxeter-Conway frieze patterns
shows that the antiperiodicity condition for the $\SL_2$-tilings 
is natural and interesting.

\subsection{Frieze patterns and $(n,n)$-antiperiodic $\SL_2$-tilings}
As explained in~\cite{BeRe},
every Coxeter-Conway frieze pattern of width $n-3$ can be extended to a tame
$(n,n)$-antiperiodic $\SL_2$-tiling,
in a unique way.

The construction is as follows. 
One adds two diagonals of $0$'s next to the diagonals of $1$'s, and then continues by antiperiodicity.

 \begin{ex}
The frieze pattern in Figure \ref{exCoCox}
corresponds to the following $(7,7)$-antiperiodic tame $\SL_2$-tiling.
$$
 \begin{array}{rrrrrrrrrrrrrrrrrrrrrr}
 &\vdots&\vdots&\vdots&\vdots&\vdots&\vdots&\vdots&\vdots&\vdots&\vdots&\vdots&\vdots&\\
\cdots&1&2 &3 &1 &1 &1 &0 &-1&-2&-3&-1&-1 &\cdots\\
\cdots& 0&1&2&1&2&3&1&0&-1&-2&-1&-2&\cdots\\
\cdots& -1&0&1&1&3&5&2&1&0&-1&-1&-3&\cdots\\
\cdots&-2&-1& 0&1&4&7&3&2&1&0&-1&-4&\cdots\\
\cdots&-1&-1&-1&0&1&2&1&1&1&1&0&-1&\cdots\\
\cdots&-2&-3&-4&-1&0&1&1&2&3&4&1&0&\cdots\\
\cdots&-3&-5&-7&-2&-1 &0&1&3 &5 &7 &2 &1&\cdots\\
\cdots&-1&-2&-3&-1&-1 &-1&0&1&2 &3 &1 &1 & \cdots\\
\cdots&0&-1&-2&-1&-2&-1 &-1&  0&1&2&1&2&\cdots\\
 &\vdots&\vdots&\vdots&\vdots&\vdots&\vdots&\vdots&\vdots&\vdots&\vdots&\vdots&\vdots&\\
\end{array}
$$
\end{ex}

For the details
of the above construction and the ``antiperiodic nature'' of Conway-Coxeter's friezes;
see~\cite{BeRe,MOST}.

\subsection{Positive rectangular domain}

In this paper, we are considering $(n,m)$-antiperiodic $\SL_2$-tilings that
contain an $m\times n$-rectangular domain of positive integers.

More precisely, we are interested in $\SL_2$-tilings of the following form:
\begin{equation}
\label{FormEq}
 \begin{array}{c|ccc|ccc|ccc|c}
 &&\vdots&&&\vdots&&&\vdots&&\\
 \hline
  &&&&&&&&&&\\
  \cdots &&P&&&-P&&&P&& \cdots\\
    &&&&&&&&&&\\
   \hline
  &&&&&&&&&&\\
  \cdots &&-P&&&P&&&-P&& \cdots\\
      &&&&&&&&&&\\
   \hline
 &&\vdots&&&\vdots&&&\vdots&&\\
\end{array}
\end{equation}
where $P$ is an $m\times n$-matrix with entries in $\Z_{>0}$.
An example of such an $\SL_2$-tilling is presented in Figure~\ref{exTiling}.

The following property is important for us.

\begin{prop}
\label{TLem}
An $(n,m)$-antiperiodic $\SL_2$-tiling that
contains a positive $m\times n$-rectangular domain is tame.
\end{prop}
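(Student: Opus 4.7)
The plan is very short and rests on one algebraic identity. First I would use the antiperiodicity to observe that every entry of the tiling is nonzero. The two relations $a_{i,j+n}=-a_{i,j}$ and $a_{i+m,j}=-a_{i,j}$ applied repeatedly show that, starting from the positive rectangular domain $P$, the whole plane $\Z\times\Z$ is partitioned into translates of $P$ and $-P$ arranged in a checkerboard pattern: the block at position $(km,\ell n)+P$ equals $(-1)^{k+\ell}P$. Since $P$ has entries in $\Z_{>0}$, every $a_{i,j}$ is either a positive or a negative integer, and in particular $a_{i,j}\neq 0$ for all $(i,j)$.

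Next I would invoke the Desnanot--Jacobi identity (Lewis Carroll / Dodgson condensation). For any $3\times 3$ matrix
$$M=\begin{pmatrix}a&b&c\\d&e&f\\g&h&k\end{pmatrix},$$
one has
$$e\cdot\det M \;=\; (ae-bd)(ek-fh)-(bf-ce)(dh-eg).$$
Applied to a $3\times 3$ adjacent block of our $\SL_2$-tiling, each of the four $2\times 2$ adjacent minors on the right equals $1$ by definition of an $\SL_2$-tiling, so the right-hand side is $1\cdot 1-1\cdot 1=0$. The central entry $e=a_{i+1,j+1}$ is nonzero by the previous paragraph, hence $\det M=0$. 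This is precisely the tameness condition, and it holds at every position $(i,j)$, which is what we wanted to show.

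The only thing one might consider delicate is checking that the antiperiodic extension really does fill the entire plane with $\pm P$ blocks (so that the central entry of every $3\times 3$ adjacent submatrix is guaranteed nonzero, including those straddling two blocks). But this is immediate from the two antiperiodicity relations applied independently in the row and column directions, so there is no real obstacle; the proposition reduces to a one-line application of Dodgson condensation once the nonvanishing of entries is noted.
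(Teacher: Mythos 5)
Your proof is correct and is essentially the paper's own argument: the paper likewise notes that antiperiodicity tiles the plane by copies of $\pm P$, so every entry is nonzero, and then applies the Desnanot--Jacobi (Dodgson) identity, in which the four adjacent $2\times 2$ minors all equal $1$, forcing $a_{i+1,j+1}\cdot\det(3\times3\ \text{block})=1\cdot1-1\cdot1=0$ and hence the vanishing of every adjacent $3\times3$ minor. The only difference is cosmetic: you spell out the checkerboard structure and the identity explicitly, where the paper states them schematically.
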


\begin{proof}
This is a consequence of the Jacobi identity or Dodgson formula on determinants:
$$
\begin{vmatrix}
\bullet & \bullet & \bullet \\
\bullet & \bullet & \bullet \\
\bullet & \bullet & \bullet \\
\end{vmatrix}
\begin{vmatrix}
\circ  & \circ  & \circ  \\
\circ  & \bullet & \circ  \\
\circ  & \circ  & \circ  \\
\end{vmatrix}
=
\begin{vmatrix}
\bullet & \bullet &  \circ  \\
\bullet & \bullet &  \circ  \\
 \circ  &  \circ  &  \circ  \\
\end{vmatrix}
\begin{vmatrix}
 \circ  &  \circ  &  \circ  \\
 \circ  & \bullet & \bullet \\
 \circ  & \bullet & \bullet \\
\end{vmatrix}
-
\begin{vmatrix}
 \circ  &  \circ  &  \circ  \\
\bullet & \bullet &  \circ  \\
\bullet & \bullet &  \circ  \\
\end{vmatrix}
\begin{vmatrix}
 \circ  & \bullet & \bullet \\
 \circ  & \bullet & \bullet \\
 \circ  &  \circ  &  \circ  \\
\end{vmatrix}
$$
where the white dots represent deleted entries, and the black dots initial entries.

Since the values are non zero and the $2\times2$-minors all equal to $1$, 
the above identity implies that all the  $3\times 3$-minors vanish.
\end{proof}

\section{The main theorem}

In this section, we formulate our main result.
The proof will be given in Section~\ref{PSec}.

\subsection{Classification}

It turns out that every $\SL_2$-tiling corresponds to a pair of frieze patterns and
a positive integer $2\times2$-matrix $M$ satisfying some conditions.

\begin{thm}
\label{thethm} 
The set of $(n,m)$-antiperiodic $\SL_2$-tilings containing a fundamental rectangular domain of positive integers 
is in a one-to-one correspondence with the set of triples $(q,q', M)$, where
$$
q=(q_0,\ldots,q_{n-1}),
\qquad
q'=(q'_0,\ldots,q'_{m-1})
$$ 
are quiddities of order $n$ and $m$, respectively, and where
$M=\begin{pmatrix}
a&b\\
c&d
\end{pmatrix}$ is a unimodular $2\times2$-matrix with positive integer coefficients, such that the inequalities
\begin{equation}
\label{StrangeEq}
q_0<\frac{b}{a},
\qquad
q'_0<\frac{c}{a}
\end{equation}
are satisfied.
\end{thm}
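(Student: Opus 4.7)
The plan is to exhibit the bijection by attaching to each $(n,m)$-antiperiodic $\SL_2$-tiling $\mathcal{A}=(a_{i,j})$ its two three-term recurrences, along rows and columns, and reading $q$, $q'$, $M$ off this data.

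First, by Proposition~\ref{TLem}, $\mathcal{A}$ is tame, so every three consecutive columns (and every three consecutive rows) are linearly dependent. Combined with the $\SL_2$-condition on adjacent $2\times 2$-minors, a short computation (the same one used in Coxeter's paper for frieze patterns) produces scalars $\beta_j$ depending only on $j$ and $\alpha_i$ depending only on $i$ such that
\begin{equation*}
a_{i,j+1}=\beta_j\, a_{i,j}-a_{i,j-1},
\qquad
a_{i+1,j}=\alpha_i\, a_{i,j}-a_{i-1,j}.
\end{equation*}
Homogeneity of these recurrences together with $(n,m)$-antiperiodicity gives $\beta_{j+n}=\beta_j$ and $\alpha_{i+m}=\alpha_i$, while positivity of the fundamental rectangle $P$ forces the $\beta_j$ and $\alpha_i$ to be positive integers (integrality coming from coprimality of adjacent entries, which in turn follows from the $\SL_2$-relation).

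Writing the column recurrence in matrix form, $n$-antiperiodicity is equivalent to the monodromy identity
\begin{equation*}
\prod_{j=0}^{n-1}\begin{pmatrix}\beta_j&-1\\ 1&0\end{pmatrix}=-\Id,
\end{equation*}
which by the continuant description of Conway--Coxeter quiddities recalled in Section~2 is precisely the condition that $q:=(\beta_0,\dots,\beta_{n-1})$ is a quiddity of order $n$; symmetrically, $q':=(\alpha_0,\dots,\alpha_{m-1})$ is a quiddity of order $m$. Choosing the indexing so that $P$ occupies $\{(i,j):0\le i\le m-1,\,0\le j\le n-1\}$, set $M:=\bigl(\begin{smallmatrix}a_{0,0}&a_{0,1}\\a_{1,0}&a_{1,1}\end{smallmatrix}\bigr)\in \SL_2(\Z)$; its entries are positive by construction. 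The entry $a_{0,-1}=-a_{0,n-1}$ is strictly negative because $a_{0,n-1}$ lies in $P$, and substituting this into the column recurrence at $j=0$ gives $q_0 a-b<0$, i.e.\ $q_0<b/a$; the symmetric argument on rows yields $q'_0<c/a$.

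For the inverse direction, given $(q,q',M)$ satisfying the hypotheses, I would build $\mathcal{A}$ by iterating the two recurrences out of the seed $M$. Consistency of the two recurrences (i.e.\ that the value at $(i,j)$ is independent of the order of iteration) is a standard computation that reduces, via the Jacobi/Dodgson identity used in Proposition~\ref{TLem}, to $\det M=1$; propagation of the $\SL_2$-condition for all adjacent $2\times 2$-minors is then automatic, and the quiddity monodromy identities translate back into $(n,m)$-antiperiodicity. The main obstacle is proving that the reconstructed $m\times n$ block is \emph{strictly positive}: the inequalities $q_0<b/a$ and $q'_0<c/a$ control only the first step beyond the seed. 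I would handle this by writing an explicit bilinear formula $a_{i,j}=\mathbf{u}_i^{\,t}\,M\,\mathbf{v}_j$, where $\mathbf{u}_i$ and $\mathbf{v}_j$ are vectors of continuants built from $q'$ and $q$, respectively, hence (by the Conway--Coxeter theorem) vectors of positive integers appearing in the two underlying frieze patterns. Combined with $M$ having positive entries and with the two inequalities controlling the signs of the boundary contributions, this should yield positivity throughout the block. Setting up this bilinear formula cleanly, with the correct indexing of the continuants, is the core technical step.
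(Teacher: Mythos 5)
Your tiling-to-triple direction is essentially the paper's own argument and is correct: tameness (Proposition~\ref{TLem}) together with the unimodular condition produces the two three-term recurrences with coefficients depending on a single index, positivity of $P$ forces the coefficients to be positive integers, and the inequalities~(\ref{StrangeEq}) come, exactly as in the paper, from $a_{0,-1}=-a_{0,n-1}<0$ inserted into the recurrence at $j=0$. One caveat: Section~2 contains no ``continuant description'' of quiddities, and your monodromy identity $\prod_j\bigl(\begin{smallmatrix}q_j&-1\\ 1&0\end{smallmatrix}\bigr)=-\Id$ expresses only antiperiodicity of solutions, whereas being a quiddity is, by Proposition~\ref{Known}, equivalent to the stronger \emph{non-oscillation} property (antiperiodicity \emph{and} exactly one sign change per period). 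The equivalence of the two for positive integer coefficients is true, but it is itself a nontrivial theorem not proved in this paper; the cheap fix is to observe that each row of $T$ is antiperiodic and contains $n$ consecutive positive entries, hence changes sign exactly once per window, and to invoke Proposition~\ref{Known} directly, as the paper does.

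The genuine gap is the one you flag yourself: strict positivity of the reconstructed $m\times n$ block in the triple-to-tiling direction. This is the heart of the paper's proof (Lemma~\ref{posP}), and your proposed bilinear-continuant route would not close it as sketched. Writing $a_{i,j}=\mathbf{u}_i^{t}M\mathbf{v}_j$ with $\mathbf{u}_0=(1,0)^{t}$ and $\mathbf{u}_1=(0,1)^{t}$, one already has $\mathbf{u}_2=(-1,q'_1)^{t}$: the continuant vectors have \emph{mixed} signs (their first components are negatives of frieze entries), so $a_{i,j}$ is a difference of two positive bilinear expressions, and entrywise positivity of $M$ decides nothing; moreover~(\ref{StrangeEq}) must be used globally, not only ``at the boundary''. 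The paper's mechanism is different and avoids any closed formula for $a_{i,j}$. From~(\ref{RecEq}) one computes $a_{0,-1}=q_0a-b<0$ and $a_{-1,0}=q'_0a-c<0$, and then the key local estimate $a_{-1,-1}=q_0q'_0a-q_0c-q'_0b+d>0$, via the chain $0<q_0=aq_0(d-q'_0b)-bq_0(c-q'_0a)<b(d-q'_0b)-bq_0(c-q'_0a)=b\,a_{-1,-1}$, which uses~(\ref{StrangeEq}) together with the derived inequalities~(\ref{StrangeEqBis}). After that, the non-oscillation property of quiddity equations (Proposition~\ref{Known}) propagates the signs: row $0$ and column $0$ of $P$ are positive (each is an antiperiodic solution with a strictly negative predecessor followed by $a>0$); then $a_{-1,-1}>0$ and $a_{0,-1}<0$ force all of $a_{0,-1},\dots,a_{m-1,-1}$ to be negative, and symmetrically $a_{-1,0},\dots,a_{-1,n-1}<0$; finally each row $i$ of $P$ has $a_{i,-1}<0<a_{i,0}$, hence is positive through $a_{i,n-1}$. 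Any rescue of your bilinear approach would amount to reproving exactly this sign-change propagation, so you may as well take Proposition~\ref{Known} as the engine of the argument, as the paper does.
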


\begin{rem}
It is important to notice that inequalities (\ref{StrangeEq}) also imply
\begin{equation}
\label{StrangeEqBis}
q_0<\frac{d}{c},
\qquad
q'_0<\frac{d}{b}.
\end{equation}
Indeed, the unimodular condition $ad-bc=1$ and
the assumption that $a,b,c,d$ are positive integers imply
that $\frac{b}{a}<\frac{d}{c}$ and $\frac{c}{a}<\frac{d}{b}$.
\end{rem}

\begin{cor}
For every pair of quiddities $q,q'$,
there exist infinitely many $(n,m)$-antiperiodic $\SL_2$-tilings containing a
fundamental rectangular domain of positive integers.
\end{cor}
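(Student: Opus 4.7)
The plan is to invoke Theorem \ref{thethm} and convert the assertion into a counting statement about unimodular matrices: with the quiddities $q$ and $q'$ fixed, I need only exhibit infinitely many $2\times 2$-matrices $M=\begin{pmatrix} a & b \\ c & d \end{pmatrix}$ with positive integer entries, $\det M = 1$, and satisfying the constraints $q_0 < b/a$ and $q'_0 < c/a$ from (\ref{StrangeEq}). The bijection of Theorem \ref{thethm} will then supply a distinct $(n,m)$-antiperiodic $\SL_2$-tiling for each such $M$.

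For the explicit construction I would take $a=1$, pick any integer $b > q_0$ and any integer $c > q'_0$, and set $d := 1 + bc$. All four entries are then positive integers, the unimodular condition $ad - bc = d - bc = 1$ is immediate, and the inequalities (\ref{StrangeEq}) read $q_0 < b$ and $q'_0 < c$, which hold by the choice of $b$ and $c$. Since there are infinitely many admissible pairs $(b,c) \in \Z_{>0} \times \Z_{>0}$ with $b > q_0$ and $c > q'_0$, and since distinct matrices correspond to distinct tilings under the bijection of Theorem \ref{thethm}, this yields infinitely many $(n,m)$-antiperiodic $\SL_2$-tilings with positive rectangular domain of quiddities $q$ and $q'$.

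I do not foresee any serious obstacle: once Theorem \ref{thethm} is in hand, the corollary is essentially an unpacking of its bijective content, and the family $M = \begin{pmatrix} 1 & b \\ c & 1+bc \end{pmatrix}$ is as transparent as one could hope for. The only genuine choice is stylistic, namely how to parametrize the relevant matrices; fixing $a=1$ has the advantage that the unimodular equation becomes linear in $d$ and the inequalities reduce to the most elementary possible form.
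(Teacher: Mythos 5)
Your proposal is correct and is essentially the paper's own proof: the authors also fix $a=1$ and use the family $M=\begin{pmatrix}1&b\\ c&bc+1\end{pmatrix}$ for sufficiently large $b,c$, invoking the bijection of Theorem \ref{thethm}. Your version merely makes the quantifiers explicit ($b>q_0$, $c>q'_0$), which is a fine stylistic refinement of the same argument.
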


\begin{proof}
Given arbitrary pair of quiddities $q$ and $q'$,
the matrices:
$$
\begin{pmatrix}
1&b\\[4pt]
c&bc+1
\end{pmatrix}
$$
satisfy~(\ref{StrangeEq}) for sufficiently large $b,c$.
\end{proof}

\subsection{The semigroup $\Sc$}
Consider the set of $2\times2$-matrices with positive integral entries
satisfying the following conditions of positivity:
\begin{equation}
\label{SGEq}
 \Sc=
\left\{ \begin{pmatrix}
a&b\\
c&d
\end{pmatrix}
\in \SL_2(\Z)
\right.
\left|
\begin{array}{l}
0<a<b<d,\\[2pt]
0<a<c<d
\end{array}
\right\}.
\end{equation}
Note that the inequalities $b<d$ and $c<d$ are included for the sake of completeness.
These inequalities actually follow from $a<b,\;a<c$ together with $ad-bc=1$ and
the assumption that $a,b,c,d$ are positive.

We have the following property.

\begin{prop}
The set $\Sc\subset\SL_2(\Z)$ is a {\it semigroup}, i.e., it is stable by multiplication.
\end{prop}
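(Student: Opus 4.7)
The plan is to prove closure by direct computation. Let $M=\begin{pmatrix} a & b \\ c & d\end{pmatrix}$ and $M'=\begin{pmatrix} a' & b' \\ c' & d'\end{pmatrix}$ be elements of $\Sc$, and write their product as
$$
MM'=\begin{pmatrix} A & B \\ C & D\end{pmatrix}=\begin{pmatrix} aa'+bc' & ab'+bd' \\ ca'+dc' & cb'+dd'\end{pmatrix}.
$$
All four entries are positive integers and $\det(MM')=1$, so $MM'\in\SL_2(\Z)$ with positive coefficients. In view of the observation already recorded in the definition of $\Sc$ (the inequalities $B<D$ and $C<D$ are consequences of $A<B$, $A<C$, $AD-BC=1$ and positivity), it suffices to establish the two strict inequalities $A<B$ and $A<C$.

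For the first, I would expand
$$
B-A=(ab'+bd')-(aa'+bc')=a(b'-a')+b(d'-c').
$$
Both summands are strictly positive, since $a,b>0$ and the inequalities $a'<b'$, $c'<d'$ hold because $M'\in\Sc$; hence $A<B$. For the second, symmetrically,
$$
C-A=(ca'+dc')-(aa'+bc')=a'(c-a)+c'(d-b),
$$
which is strictly positive because $a',c'>0$ and the inequalities $a<c$, $b<d$ hold because $M\in\Sc$.

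There is no real obstacle: the only thing worth highlighting is the pleasant symmetry that $A<B$ is forced by the inequalities of the right factor $M'$, while $A<C$ is forced by those of the left factor $M$, so both factors contribute. The one point deserving a parenthetical remark is that one may either invoke the closing observation already in the paragraph introducing $\Sc$ to get $B<D$ and $C<D$ for free, or verify them directly by the same two-term expansion, e.g. $D-B=b'(c-a)+d'(d-b)>0$ and $D-C=b(d'-c')+d(b'-a')>0$.
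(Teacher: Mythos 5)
Your proof is correct, and it is precisely the direct verification that the paper leaves to the reader (its proof of this proposition reads simply ``Straightforward''): the reduction to the two inequalities $A<B$ and $A<C$ via the remark recorded in the definition of $\Sc$, followed by the expansions $B-A=a(b'-a')+b(d'-c')$ and $C-A=a'(c-a)+c'(d-b)$, is exactly the intended argument. One small slip in your optional parenthetical: the correct expansion is $D-C=c(b'-a')+d(d'-c')$, not $b(d'-c')+d(b'-a')$ --- for instance with $M=M'=\left(\begin{smallmatrix}1&2\\2&5\end{smallmatrix}\right)$ one has $D-C=17$ while your expression gives $11$ --- though the conclusion $D-C>0$ is unaffected, and your main route does not rely on this identity in any case.
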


\begin{proof}
Straightforward.
\end{proof}

The semigroup~$\Sc$ naturally appears in our context.
Indeed,
if $n,m\geq3$, then the inequalities~(\ref{StrangeEq}) imply $M\in\Sc$.
Moreover every quiddity $q$ contains a unit entry, so that after a cyclic permutation
of any quiddity one can obtain $q_0=1$.
The inequalities~(\ref{StrangeEq}) then coincide with the conditions~(\ref{SGEq}).


\subsection{Examples}
Let us give two simple examples of $\SL_2$-tilings.

\begin{ex}
There is a one-to-one correspondence between
$(3,3)$-antiperiodic $\SL_2$-tilings containing a
fundamental domain of positive integers and elements of the semigroup $\Sc$.
Indeed, the only quiddity of order $3$ is $q=(1,1,1)$.
To every matrix~(\ref{SGEq}) there corresponds the following $\SL_2$-tiling:
$$
\begin{array}{ccccc}
&\vdots&\vdots&\vdots&\\
\cdots&a&b&b-a&\cdots\\[8pt]
\cdots&c&d&d-c&\cdots\\[8pt]
\cdots&c-a&d-b&d-b-c+a&\cdots\\
&\vdots&\vdots&\vdots&\\
\end{array}
$$
It is a good exercise to check that the positivity condition
$d-b-c+a>0$ follows from~(\ref{SGEq}) together with $ad-bc=1$.
\end{ex}

\begin{ex}
In the case $n=2$ or $m=2$,  the conditions~(\ref{StrangeEq}) become trivial.

Consider also the simplest (degenerate) case of $(2,2)$-antiperiodic $\SL_2$-tilings.
A $(2,2)$-antiperiodic $\SL_2$-tiling containing a
fundamental domain of positive integers is of the form:
$$
\begin{array}{cccccc}
&\vdots&\vdots&\vdots&\vdots&\\[4pt]
\cdots&a&b&-a&-b&\cdots\\[4pt]
\cdots&c&d&-c&-d&\cdots\\
&\vdots&\vdots&\vdots&\vdots&\\
\end{array}
$$
where $\begin{pmatrix}
a&b\\
c&d
\end{pmatrix}$ is an arbitrary unimodular matrix with positive integer coefficients.
Note that this case corresponds to the ``degenerate quiddity'' of order $2$, namely $q=(0,0)$.
\end{ex}

\section{Frieze patterns and linear recurrence equations}

We will recall here a remarkable and well-known property of Coxeter-Conway frieze patterns.
It concerns a  relation of frieze patterns and linear recurrence equations.
The statement presented in this subsection was implicitly obtained in~\cite{CoCo};
 for details see~\cite{MOST}.
We recall this statement without proof.

\subsection{Discrete non-oscillating Hill equations}

\begin{defn}
Let $(c_i)_{i\in \Z}$ be an arbitrary $n$-periodic sequence of numbers.
\begin{enumerate}
\item[(a)]
A linear difference equation
\begin{equation}
\label{Sro}
V_{i+1}=c_iV_i-V_{i-1},
\end{equation}
where the sequence $(c_i)$ is given (the coefficients)
and where $(V_i)$ is unknown (the solution),
is called a discrete Hill, or Sturm-Liouville, or
one-dimensional Schr\"odinger equation.
\item[(b)]
The equation~(\ref{Sro}) is called {\it non-oscillating} if every solution $(V_i)$
is antiperiodic:
$$
V_{i+n}=-V_i,
$$
for all $i$, and  has exactly one sign change in any sequence $(V_i, V_{i+1}, \ldots V_{i+n})$.
\end{enumerate}
\end{defn}

In other words,
every solution of a non-oscillating equation must have non-negative intervals of length $n$,
that is, $n$ consecutive non-negative values: 
$
(V_k,\ldots,V_{k+n-1}).
$
 
Moreover, for  {\it generic} solution of~(\ref{Sro}), 
all the elements $V_j$ of a non-negative interval are {\it strictly positive}.
Zero values can only occur at the endpoints: $V_k=0$, or $V_{k+n-1}=0$.

Note also that the coefficients in a non-oscillating equation are necessarily positive.

\subsection{Frieze patterns and difference equations}
The relation between the equations~(\ref{Sro}) and Coxeter-Conway frieze patterns
is as follows.

\begin{prop}
\label{Known}
Given an equation \eqref{Sro} with integer coefficients, it is a non-oscillating equation if and only if 
the coefficients $(c_0,c_1,\ldots, c_{n-1})$ form a quiddity.
\end{prop}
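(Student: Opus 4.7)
The plan is to identify the two fundamental solutions of \eqref{Sro} with two consecutive rows of a Coxeter-Conway frieze pattern and then appeal to the Conway-Coxeter theorem.

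Fix the fundamental solutions $(U_i)$, $(W_i)$ of \eqref{Sro} determined by $U_{-1}=0,\ U_0=1$ and $W_0=0,\ W_1=1$. Their Casoratian $U_i W_{i+1}-U_{i+1}W_i$ is invariant under the recurrence and equals $1$. Antiperiodicity of \emph{every} solution is equivalent to the monodromy matrix of \eqref{Sro} being $-\Id$, which amounts to the two boundary conditions $U_{n-1}=0$ and $W_n=0$; evaluating the Casoratian at $i=n-2$ then forces $U_{n-2}=W_{n-1}=1$.

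For ``quiddity $\Rightarrow$ non-oscillating'': given a quiddity $(c_i)$, the Conway-Coxeter theorem yields a positive-integer Coxeter-Conway frieze of width $n-3$ with this quiddity, whose extension to the $(n,n)$-antiperiodic tame $\SL_2$-tiling (Section 3.3) has $U$ and $W$ as two consecutive rows. Both are $n$-antiperiodic with a single simple zero per period, and these zeros occur at different indices, so the projective curve $i\mapsto [U_i:W_i]\in\RP^1$ has winding number $1$. Consequently every nonzero linear combination $\alpha U+\beta W$ vanishes exactly once per period and has exactly one sign change, so the equation is non-oscillating.

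For ``non-oscillating $\Rightarrow$ quiddity'': conversely, assume \eqref{Sro} is non-oscillating (so all $c_i$ are positive integers). For each $k\in\Z$, let $V^{(k)}$ be the solution normalized by $V^{(k)}_{k-1}=0$ and $V^{(k)}_k=1$. Antiperiodicity gives $V^{(k)}_{k+n-1}=0$, and the Casoratian of $V^{(k)}$ with $V^{(k+1)}$ (again equal to $1$) yields $V^{(k)}_{k+n-2}=1$. Non-oscillation forbids any zero of $V^{(k)}$ at an interior index $k<j<k+n-1$, since such a zero would produce a second sign change per period; combined with the positive boundary values $V^{(k)}_k=V^{(k)}_{k+n-2}=1$, this forces every interior $V^{(k)}_j$ to be a strictly positive integer. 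The array $c_{k,j}:=V^{(k)}_j$ thus has the correct zero and unit diagonals, strictly positive integer interior, and satisfies the unimodular rule $c_{k,j}c_{k+1,j+1}-c_{k,j+1}c_{k+1,j}=1$, which is precisely the Casoratian identity. Therefore $(c_{k,j})$ is a Coxeter-Conway frieze of width $n-3$, whose quiddity diagonal is $V^{(k)}_{k+1}=c_k$. By the Conway-Coxeter theorem, $(c_0,\ldots,c_{n-1})$ is a quiddity.

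The main technical step is the second direction: one must carefully use non-oscillation to exclude any accidental interior zero of a fundamental solution, thereby securing the strict positivity of the constructed frieze entries.
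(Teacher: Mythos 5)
Your proposal is correct and takes essentially the same approach as the paper: both identify solutions of \eqref{Sro} with the antiperiodically extended rows of a Coxeter--Conway frieze (the coefficients $c_i$ sitting on the first nontrivial diagonal) and then conclude via the Conway--Coxeter theorem. The only difference is one of detail rather than method: where the paper defers to properties established in \cite{Cox,CoCo} (with details in \cite{MOST}), you prove them directly via the Casoratian normalizations and the winding-number argument --- in the latter, you should say explicitly that the monotone rotation of $i\mapsto[U_i:W_i]$ comes from the Casoratian being constant and positive, since ``the zeros occur at different indices'' alone does not yield winding number $1$.
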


\begin{proof}
This is an immediate consequence of properties  established by Coxeter and Conway. 
Indeed, it was proved in \cite{Cox} (see also \cite{CoCo} property (17))
that the entries in any row of the pattern (extended by antiperiodicity) form a solution of an equation \eqref{Sro}, where
the coefficients $c_i$ are given by the sequence on the first non-trivial diagonal.
Thus, from an non-oscillating equation one can write down a frieze, and vice versa.
$$
\begin{array}{ccccccccccccc}
\ddots&\ddots&&\ddots&\ddots&\ddots\\
&1&c_0&\cdots&1&0&-1&\cdots\\[6pt]
&&1&c_1&\cdots&1&0&-1&\cdots\\[4pt]
&&&1&c_2&\cdots&1&0&-1&\cdots\\[2pt]
&&&&\ddots&\ddots&&\ddots&\ddots&\ddots\end{array}
$$
Finally, the integer condition establish the correspondence with quiddities.
\end{proof}

Of course, for an arbitrary non-oscillating equation~(\ref{Sro}),
the corresponding frieze pattern does not necessarily have integer entries.
In~\cite{MOST}, the space of frieze patterns and the space of non-oscillating equation~(\ref{Sro})
are identified in a more general setting.

\begin{ex}
(a)
The simplest quiddity $q=(1,1,1)$
corresponds to the non-oscillating equation with all $c_i=1$.
Every solution of this equation is $3$-antiperiodic and can be obtained as a linear combination
of the following two solutions:
$$
(V^{(1)}_i)=(\ldots,0,1,1,0,-1,-1,\ldots),
\qquad
(V^{(2)}_i)=(\ldots,1,1,0,-1,-1,0\ldots).
$$
This corresponds to a degenerate frieze of Coxeter-Conway of width 0.
(b)
The frieze from Figure~\ref{exCoCox} corresponds
to the non-oscillating equation with $7$-antiperiodic solutions that
are linear combinations of the following two:
$$
(V^{(1)}_i)=(\ldots,1,2,3,1,1,1,0,\ldots),
\qquad
(V^{(2)}_i)=(\ldots,0,1,2,1,2,3,1,\ldots).
$$
The above two solutions are exactly the first two rows of the frieze in Figure~\ref{exCoCox}.
One can of course choose different rows for a basis.

Note that, in the both cases, the basis solutions 
$(V^{(1)}_i),(V^{(2)}_i)$ are not generic since they contain zeros.
\end{ex}

\section{Proof of Theorem \ref{thethm}}\label{PSec}

\subsection{The construction}

Given a triple $(q,q', M)$ as in Theorem \ref{thethm},
we will construct an $\SL_2$-tiling satisfying the above conditions.
Define
$T=(a_{i,j})$ using the following recurrence relations:
\begin{equation}
\label{RecEq}
\begin{array}{rcl}
a_{i,j+1}&:=&q_ja_{i,j}-a_{i,j-1},\\[4pt]
a_{i+1,j}&:=&q'_ia_{i,j}-a_{i-1,j},
\end{array}
\end{equation}
for all $i,j\in\Z$, where the quiddities are periodically extended, i.e $q_i=q_{i+n}, q'_i=q'_{i+m}$,
and taking the initial conditions
\begin{equation}
\label{Init}
\begin{pmatrix}
a_{0,0}&a_{0,1}\\[4pt]
a_{1,0}&a_{1,1}
\end{pmatrix}
:=
 \begin{pmatrix}
a&b\\[2pt]
c&d
\end{pmatrix}.
\end{equation}
It is very easy to check that the tiling $T$ is well-defined,
i.e., the two recurrences commute and
the calculations along the rows and columns give the same result.
We show that the defined tiling $T$ 
contains a fundamental rectangular domain of positive integers.

By Proposition~\ref{Known}, the defined tiling $T$ is $(n,m)$-antiperiodic.
Consider the following $m\times n$-subarray of $T$
\begin{equation}
\label{Ptile}
P=
\begin{pmatrix}
a_{0,0}&a_{0,1}&\cdots&a_{0,n-1}\\[4pt]
a_{1,0}&a_{1,1}&\cdots&a_{1,n-1}\\[4pt]
\cdots&&&\\
a_{m-1,0}&a_{m-1,1}&\cdots&a_{m-1,n-1}
\end{pmatrix}.
\end{equation}

The main step of the proof of Theorem~\ref{thethm} is the following lemma.

\begin{lem}
\label{posP}
The entries of $P$ are positive integers.
\end{lem}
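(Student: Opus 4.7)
The plan is to exploit the linear structure of the tiling in both directions. Integrality of the entries of $P$ is immediate by induction from the recurrences~(\ref{RecEq}) and the initial conditions~(\ref{Init}), so the whole issue is positivity. Since $q$ is a quiddity, Proposition~\ref{Known} guarantees that the row recurrence $V_{j+1}=q_jV_j-V_{j-1}$ is non-oscillating; let $U=(U_j)$ and $V=(V_j)$ be its two distinguished solutions normalized by $U_{-1}=0,\;U_0=1$ and $V_0=0,\;V_1=1$. By $n$-antiperiodicity one has $U_{n-1}=-U_{-1}=0$ and $V_n=-V_0=0$, so the non-oscillating property forces $U_j>0$ for $j\in\{0,\ldots,n-2\}$ and $V_j>0$ for $j\in\{1,\ldots,n-1\}$; I attach analogous sequences $U',V'$ to the quiddity $q'$ in the column direction.

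For each fixed $i$, the row $(a_{i,j})_j$ solves the row recurrence, so it decomposes uniquely as
\[
a_{i,j}=f_i\,U_j+g_i\,V_j,
\qquad
f_i:=a_{i,0},\quad g_i:=a_{i,1}-q_0\,a_{i,0}.
\]
By construction of $T$, both $(a_{i,0})_i$ and $(a_{i,1})_i$ satisfy the column recurrence, so $(f_i)$ and $(g_i)$ are themselves solutions of that column recurrence. Applying the same decomposition in the column direction,
\[
f_i=a\,U'_i+(c-q'_0\,a)\,V'_i,
\qquad
g_i=\mu\,U'_i+(\nu-q'_0\,\mu)\,V'_i,
\]
with $\mu:=b-q_0\,a$ and $\nu:=d-q_0\,c$, and substituting back produces the tensor decomposition
\[
a_{i,j}=a\,U_jU'_i+(c-q'_0 a)\,U_jV'_i+\mu\,V_jU'_i+(\nu-q'_0\mu)\,V_jV'_i.
\]

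Each of the four bilinear products $U_jU'_i,\,U_jV'_i,\,V_jU'_i,\,V_jV'_i$ is non-negative on $\{0,\ldots,m-1\}\times\{0,\ldots,n-1\}$, and the positivity patterns of $U,V,U',V'$ ensure that at every $(i,j)$ in this rectangle at least one of them is strictly positive. Thus $a_{i,j}>0$ throughout $P$ as soon as the four scalar coefficients $a$, $c-q'_0 a$, $\mu$ and $\nu-q'_0\mu$ are strictly positive. Three of them are immediate from the hypotheses: $a>0$, $\mu>0$ is the inequality $q_0<b/a$, and $c-q'_0 a>0$ is the inequality $q'_0<c/a$.

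The main obstacle is the remaining inequality $\nu-q'_0\mu>0$, which does not follow from any of the three other positivities alone; the unimodularity $ad-bc=1$ must intervene. A direct expansion yields the identity
\[
a(\nu-q'_0\mu)=(ad-bc)+(b-q_0 a)(c-q'_0 a)=1+\mu\,(c-q'_0 a),
\]
and since $\mu$ and $c-q'_0 a$ are positive integers, the right-hand side is at least $2$. Hence $\nu-q'_0\mu\geq 2/a>0$, which completes the proof.
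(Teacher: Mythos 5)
Your proof is correct, but it takes a genuinely different route from the paper's. The paper argues locally and then propagates signs: it computes the three border entries $a_{0,-1}=q_0a-b$, $a_{-1,0}=q'_0a-c$ and $a_{-1,-1}=q_0q'_0a-q_0c-q'_0b+d$, checks their signs, and then applies Proposition~\ref{Known} to the rows and columns of the extended tiling --- each is an antiperiodic solution with a single sign change per period, so a sign change between indices $-1$ and $0$ forces positivity on the next $n$ (resp.\ $m$) entries. You instead expand $a_{i,j}$ in the tensor basis of fundamental solutions $U,V,U',V'$ and reduce the lemma to strict positivity of four scalar coefficients; note that these are exactly the paper's border entries in disguise, since $\mu=b-q_0a=-a_{0,-1}$, $c-q'_0a=-a_{-1,0}$ and $\nu-q'_0\mu=a_{-1,-1}$, so both proofs ultimately rest on the same four sign conditions and on Proposition~\ref{Known} (you invoke it once, to fix the sign patterns of $U,V,U',V'$; the paper invokes it repeatedly for propagation). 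Your handling of the one nontrivial inequality is cleaner: the identity $a(\nu-q'_0\mu)=(ad-bc)+(b-q_0a)(c-q'_0a)$ settles $a_{-1,-1}>0$ in one line via unimodularity, where the paper uses a more convoluted chain (multiplying by $b$ and implicitly using $q'_0<d/b$ from~(\ref{StrangeEqBis})). What your version buys in addition is a closed bilinear formula for all entries of $P$ in terms of fundamental solutions, which anticipates the Farey-distance formula of Theorem~\ref{theprop}; what the paper's buys is brevity. One point you use correctly but silently: the strict positivity of $U_j$ on $\{0,\dots,n-2\}$ (and similarly for $V,U',V'$) needs the observation that a solution of a non-oscillating equation vanishes only at the endpoints of its non-negative interval --- an interior zero, at which the recurrence forces $U_{k+1}=-U_{k-1}$, would produce a second sign change in a window of length $n+1$, contradicting non-oscillation.
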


\begin{proof}
It turns out that thanks to Proposition~\ref{Known} we will only need to
perform ``local'' calculation of the elements neighboring to the initial ones:  
$$
\begin{array}{c|cc}
a_{-1,-1}&a_{-1,0}&a_{-1,1}\\[4pt]
\hline
a_{0,-1}&a&b\\[4pt]
a_{1,-1}&c&d
\end{array}
$$

The conditions~(\ref{StrangeEq}) imply:
$a_{0,-1}<0$ and $a_{-1,0}<0.$
Indeed, from (\ref{RecEq}) and (\ref{Init}), one has
$$
a_{0,-1}=q_0a-b,
\qquad
a_{-1,0}=q'_0a-c.
$$
Since the rows and the columns of $P$ are solutions of
non-oscillating equations, and $a$ is positive, this implies that
all the values of the first row and the first column of $P$ are positive.

Furthermore, again from the recurrence (\ref{RecEq}), one has
$$
a_{-1,-1}=q_0q'_0a-q_0c-q'_0b+d.
$$
The condition (\ref{StrangeEq}) then implies $a_{-1,-1}>0$.
Indeed, one establishes
$$
0<q_0=aq_0(d-q'_0b)-bq_0(c-q'_0a)<b(d-q'_0b)-bq_0(c-q'_0a)=b(q_0q'_0a-q_0c-q'_0b+d).
$$

Proposition~\ref{Known} then guarantees that
$$
\begin{array}{lcr}
a_{0,-1}<0,
&\ldots,&
a_{m-1,-1}<0,\\[4pt]
a_{-1,0}<0,
&\ldots,&
a_{-1,n-1}<0,
\end{array}
$$
and applying again Proposition~\ref{Known}, we deduce that all the entries in $P$ are positive.
\end{proof}

\subsection{From tilings to triples}
Conversely,
consider a $(n,m)$-periodic $\SL_2$-tiling  $T=(a_{i,j})_{(i,j)\in\Z\times \Z}$ 
such that the $m\times n$-subarray $P$ given by~(\ref{Ptile})
consists in positive integers.
We claim that $T$ can be obtained by the above construction.

\begin{lem}\label{ratios}
The ratios of the first two rows of $P$ form a decreasing sequence:
$$
\frac{a_{0,0}}{a_{1,0}} > \frac{a_{0,1}}{a_{1,1}} > \ldots > \frac{a_{0,n-1}}{a_{1,n-1}}, 
$$
and similarly for the ratios of the first two columns of $P$:
$$
\frac{a_{0,1}}{a_{0,0}} > \frac{a_{1,1}}{a_{1,0}} > \ldots > \frac{a_{m-1,1}}{a_{m-1,0}}.
$$

\end{lem}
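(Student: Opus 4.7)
The plan is to reduce both monotonicity claims to a single application of the defining unimodular rule of an $\SL_2$-tiling, which will express the difference of two consecutive ratios as an explicit reciprocal of positive entries of $P$.

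For the row claim, I would subtract two neighbouring ratios and bring them over a common denominator:
\[
\frac{a_{0,j}}{a_{1,j}}-\frac{a_{0,j+1}}{a_{1,j+1}}
=\frac{a_{0,j}\,a_{1,j+1}-a_{0,j+1}\,a_{1,j}}{a_{1,j}\,a_{1,j+1}}.
\]
The numerator is exactly the adjacent $2\times 2$ minor of $T$ based at position $(0,j)$, and by the definition of an $\SL_2$-tiling it equals $1$. The denominator is a product of two entries of $P$, and since $P$ consists of positive integers by hypothesis, it is strictly positive. The difference is therefore $1/(a_{1,j}a_{1,j+1})>0$ for every $j=0,\ldots,n-2$, which yields the asserted strict decrease along the first two rows.

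For the column claim I would run the identical argument on the first two columns:
\[
\frac{a_{i,1}}{a_{i,0}}-\frac{a_{i+1,1}}{a_{i+1,0}}
=\frac{a_{i,1}\,a_{i+1,0}-a_{i+1,1}\,a_{i,0}}{a_{i,0}\,a_{i+1,0}},
\]
and again recognise the numerator as (plus or minus) the adjacent $2\times 2$ minor based at $(i,0)$, of absolute value $1$, while the denominator is a product of two positive entries of $P$. The monotonicity then follows with the sign dictated by the orientation of that minor under the unimodular rule.

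The main thing to verify is merely that every entry appearing in the two displayed computations actually lies within the rectangular fundamental domain $P$, i.e.\ that all indices stay in $\{0,\ldots,m-1\}\times\{0,\ldots,n-1\}$; this is automatic, since for each comparison we use only neighbouring indices inside $P$. Consequently I do not expect any real obstacle: the lemma is a one-line consequence of the $\SL_2$-identity combined with positivity of the $P$-entries.
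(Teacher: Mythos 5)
Your method is exactly the paper's: the proof in the text is the one-line observation that both claims follow from the unimodular conditions $a_{0,j}a_{1,j+1}-a_{0,j+1}a_{1,j}=1$ together with the positivity of the entries of $P$, and your common-denominator computation is just that observation written out. The row half of your argument is complete and correct.

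The gap is in the column half, precisely at the point where you defer to ``the sign dictated by the orientation of that minor.'' That sign is not a formality: the adjacent minor based at $(i,0)$ is $a_{i,0}a_{i+1,1}-a_{i,1}a_{i+1,0}=1$, so the numerator in your second display equals $-1$, and your own computation gives
\[
\frac{a_{i,1}}{a_{i,0}}-\frac{a_{i+1,1}}{a_{i+1,0}}=\frac{-1}{a_{i,0}\,a_{i+1,0}}<0,
\]
i.e.\ the chain of column ratios, as printed in the lemma, is strictly \emph{increasing}, not decreasing. The paper's own example (Figure~\ref{exTiling}) confirms this: $\frac{5}{2}<\frac{18}{7}<\frac{13}{5}<\frac{8}{3}$. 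So the statement as displayed contains a sign/typo error: the decreasing sequence is that of the reciprocal ratios $\frac{a_{i,0}}{a_{i,1}}$ (equivalently, the displayed column chain with all inequalities reversed), and it is that orientation which is actually needed later, in Lemma~\ref{DefRec}, to conclude $q'_i=a_{i-1,j}a_{i+1,j+1}-a_{i-1,j+1}a_{i+1,j}>0$. Your approach is the right one and matches the paper's, but as written your column argument would ``establish'' the false printed direction; you must carry the sign through explicitly and state the corrected inequality rather than leaving the orientation unresolved.
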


\begin{proof}
This follows from the unimodular conditions $a_{0,j}a_{1,j+1}-a_{0,j+1}a_{1,j}=1$ 
and the assumption that all the entries of $P$ are positive.
\end{proof}

\begin{lem}
\label{DefRec}
The entries of $T$ satisfy the recurrence relations~(\ref{RecEq})
where $q=(q_j)$ and $q'=(q'_i)$ are $n$-periodic and $m$-periodic
sequences of positive integers, respectively.
\end{lem}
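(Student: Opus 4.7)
The plan is to apply Proposition~\ref{TLem} to deduce $T$ is tame, use tameness to extract the candidate recurrence coefficients, and then verify in turn their integrality, periodicity, and positivity. The main obstacle is positivity at the boundary indices (such as $j=0$), since the identity $q_0 a_{0,0} = a_{0,1} - a_{0,n-1}$ involves an entry outside $P$ and is not manifestly positive; this is where I would invoke Proposition~\ref{Known}.

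For the recurrence, Proposition~\ref{TLem} yields tameness, so three consecutive columns of $T$ satisfy a linear relation. Since adjacent $2\times 2$ minors equal $1$, any two consecutive columns are linearly independent, and the relation can be written uniquely as $a_{i,j+1} = \alpha_j a_{i,j-1} + \beta_j a_{i,j}$. Substituting into $a_{i,j}a_{i+1,j+1} - a_{i,j+1}a_{i+1,j}=1$ and using the $2\times 2$ minor at position $(i,j-1)$ forces $\alpha_j = -1$; setting $q_j := \beta_j$ gives $a_{i,j+1} = q_j a_{i,j} - a_{i,j-1}$. The symmetric argument applied to rows produces $q'_i$.

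For integrality and periodicity, the whole tiling $T$ consists of integers, being the $(n,m)$-antiperiodic extension of the integer matrix $P$. Writing $q_j = p/s$ in lowest terms, the identity $s(a_{i,j+1}+a_{i,j-1}) = p\,a_{i,j}$ shows that $s$ divides every entry of column $j$; but the $\SL_2$ relation forces $\gcd(a_{i,j}, a_{i+1,j}) = 1$, hence $s=1$. Periodicity follows immediately from $q_j = (a_{i,j+1} + a_{i,j-1})/a_{i,j}$ together with $a_{i,j+n} = -a_{i,j}$, which gives $q_{j+n} = q_j$.

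For positivity I would apply Proposition~\ref{Known} to the Hill equation $V_{j+1} = q_j V_j - V_{j-1}$ whose solutions include every row of $T$. By Lemma~\ref{ratios} the first two rows of $P$ are linearly independent, so they span the two-dimensional solution space. An arbitrary solution $V_j = \alpha a_{0,j} + \beta a_{1,j}$ is $n$-antiperiodic; moreover, the strict monotonicity of the ratios $a_{0,j}/a_{1,j}$ on $\{0,1,\ldots,n-1\}$ given by Lemma~\ref{ratios} forces $V_j$ to change sign at most once on this range, and combined with $V_{j+n} = -V_j$ this gives exactly one sign change per window of $n+1$ consecutive values. The Hill equation is therefore non-oscillating with integer coefficients, so by Proposition~\ref{Known} the sequence $(q_j)$ is a quiddity, which by definition consists of positive integers. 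The argument for $(q'_i)$ is entirely symmetric.
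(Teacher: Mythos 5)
Your proposal is correct, and its first half --- extracting the recurrence --- is essentially the paper's own argument: the paper likewise writes column $j+1$ as a combination $\lambda_{i,j}$ of column $j$ and $\mu_{i,j}$ of column $j-1$, computes $\mu_{i,j}=-1$ from the unimodular conditions, and invokes tameness (Proposition~\ref{TLem}) to see that $\lambda_{i,j}$ is independent of $i$. Where you genuinely diverge is integrality and positivity. The paper reads both off the closed formula that Cramer's rule gives for free, namely $q_j=\lambda_{i,j}=a_{i,j-1}a_{i+1,j+1}-a_{i,j+1}a_{i+1,j-1}$: this is manifestly an integer, and its positivity is exactly the monotonicity of ratios in Lemma~\ref{ratios} (applied to consecutive pairs of rows, with boundary indices handled via antiperiodicity, under which $\lambda_{i,j}$ is $n$-periodic in $j$). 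You instead prove integrality by a denominator/gcd argument --- valid, since the $\SL_2$ relation makes vertically adjacent entries coprime, and your periodicity formula $q_j=(a_{i,j+1}+a_{i,j-1})/a_{i,j}$ is legitimate because double antiperiodicity plus positivity of $P$ makes every entry of $T$ nonzero, a point you should state --- and you prove positivity by showing the Hill equation is non-oscillating and invoking Proposition~\ref{Known}. That last step is in effect the paper's \emph{next} lemma (that $q$ and $q'$ are quiddities), so you prove more than the statement asks, at the cost of resting positivity on the Coxeter--Conway machinery behind Proposition~\ref{Known} rather than on a one-line determinant computation. Two small glosses in that step deserve a sentence each: linear independence of the first two rows comes from the determinant of the initial $2\times2$ block being $1$ (not from Lemma~\ref{ratios}, which gives monotonicity), and your sign-change count is verified only on the window $(V_0,\dots,V_n)$; windows straddling the period boundary follow because, for an $n$-antiperiodic sequence, the number of sign changes in $(V_i,\dots,V_{i+n})$ is independent of $i$ (and one should fix a convention for zeros in nongeneric solutions). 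With those points made explicit, both routes are sound; the paper's buys brevity, yours buys the quiddity property simultaneously.
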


\begin{proof}
Given $(i,j)$, there is a linear relation
$$
\begin{pmatrix}
a_{i,j+1}\\
a_{i+1,j+1}
\end{pmatrix}
=\lambda_{i,j}
\begin{pmatrix}
a_{i,j}\\
a_{i+1,j}
\end{pmatrix}+
\mu_{i,j}
\begin{pmatrix}
a_{i,j-1}\\
a_{i+1,j-1}
\end{pmatrix}.
$$
Using the  $\SL_2$ conditions one immediately obtains the values
$$
\lambda_{i,j}= a_{i,j-1}a_{i+1,j+1}-a_{i,j+1}a_{i+1,j-1}, \qquad \mu_{i,j}=-1.
$$
From Lemma \ref{ratios}, one has $\lambda_{i,j}>0$.
Furthermore, it readily follows from the tameness property
(see Proposition~\ref{TLem}) 
that $\lambda_{i,j}$ actually does not depend on $i$, so we use the notation $q_j:=\lambda_{i,j}$.

The arguments for the rows are similar.
\end{proof}

\begin{lem}
\label{DefRec}
The above sequences $(q_0,\ldots,q_{m-1})$ and $(q'_0,\ldots,q_{n-1})$ are quiddities.\end{lem}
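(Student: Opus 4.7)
The plan is to realize $(q_0,\ldots,q_{n-1})$ as the quiddity of a triangulated Farey $n$-gon built directly from the first two rows of $P$; since a quiddity is by definition such a sequence of triangle-incidence counts, this gives the claim. The argument for $(q'_0,\ldots,q'_{m-1})$ will be completely symmetric, obtained by replacing rows by columns and invoking Lemma~\ref{ratios} in its column form.

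Concretely, I form the rationals
$$
v_j := \frac{a_{0,j}}{a_{1,j}}, \qquad j=0,\ldots,n-1.
$$
Each $v_j$ is a reduced positive fraction: the $\SL_2$-condition $a_{0,j}a_{1,j+1}-a_{0,j+1}a_{1,j}=1$ forces $\gcd(a_{0,j},a_{1,j})=1$, and both numerator and denominator are positive because $P$ is positive. Lemma~\ref{ratios} supplies the strict decrease $v_0>v_1>\cdots>v_{n-1}$, and the $\SL_2$-condition simultaneously gives $d(v_j,v_{j+1})=1$ for each consecutive pair. For the closing edge I combine the $\SL_2$-condition between columns $n-1$ and $n$ with the antiperiodicity $a_{i,n}=-a_{i,0}$ to obtain $a_{0,0}a_{1,n-1}-a_{0,n-1}a_{1,0}=1$, whence $d(v_{n-1},v_0)=1$. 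So $(v_0,\ldots,v_{n-1})$ is an $n$-cycle in the Farey graph, i.e.\ a Farey $n$-gon.

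By the corollary asserting that every Farey $n$-gon is triangulated, $(v_j)$ carries a canonical triangulation, and Lemma~\ref{DistLem} identifies the number of triangles incident to $v_j$ with $d(v_{j-1},v_{j+1})$. A two-line determinant manipulation using the row recurrence $a_{i,j+1}=q_j a_{i,j}-a_{i,j-1}$ gives
$$
a_{0,j-1}a_{1,j+1}-a_{0,j+1}a_{1,j-1} \;=\; q_j\bigl(a_{0,j-1}a_{1,j}-a_{0,j}a_{1,j-1}\bigr) \;=\; q_j,
$$
so $d(v_{j-1},v_{j+1})=q_j$. This identifies $(q_0,\ldots,q_{n-1})$ with the quiddity of the triangulated Farey $n$-gon $(v_j)$, which is exactly the definition of a quiddity of order~$n$.

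The only delicate point I expect is that Lemma~\ref{DistLem} is stated for a \emph{normalized} Farey $n$-gon (with $v_0=\infty$ and $v_{n-1}=0$), whereas our $(v_j)$ need not be normalized. This is harmless: Farey distances and the triangulation are $\SL_2(\Z)$-invariant, so one can either apply a transformation sending $v_0$ to $\infty$ and $v_{n-1}$ to $0$ (which exists because $v_0$ and $v_{n-1}$ are joined by a Farey edge) before invoking the lemma, or simply observe that its proof uses only Farey-graph-local data and therefore applies verbatim.
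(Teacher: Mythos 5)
Your proposal is correct, but it takes a genuinely different route from the paper. The paper's own proof is a two-line reduction to the difference-equation dictionary of Section~5: the rows (resp.\ columns) of $T$ are antiperiodic solutions of the Hill equation \eqref{Sro} with coefficients $q_j$ (resp.\ $q'_i$), and Proposition~\ref{Known} --- the Coxeter--Conway equivalence between integer non-oscillating equations and quiddities --- immediately gives the claim. You instead argue geometrically: the ratios $v_j=a_{0,j}/a_{1,j}$ of the first two rows are reduced positive fractions, strictly decreasing by Lemma~\ref{ratios}, with $d(v_j,v_{j+1})=1$ from the unimodular conditions and $d(v_{n-1},v_0)=1$ from antiperiodicity, hence form a Farey $n$-gon, necessarily triangulated; the determinant identity $a_{0,j-1}a_{1,j+1}-a_{0,j+1}a_{1,j-1}=q_j\,(a_{0,j-1}a_{1,j}-a_{0,j}a_{1,j-1})=q_j$ together with Lemma~\ref{DistLem} then identifies $q_j$ with the number of triangles incident at $v_j$. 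All steps check out, including your treatment of the normalization issue: the Farey distance is the absolute determinant of primitive integer lifts, hence $\SL_2(\Z)$-invariant, and the proof of Lemma~\ref{DistLem} is indeed local. Two small points worth making explicit: at the wrap-around indices $j=0$ and $j=n-1$ the same antiperiodicity trick used for the closing edge is needed (one gets $a_{0,-1}a_{1,1}-a_{0,1}a_{1,-1}=-q_0$, so the distance is still $q_0$), and the degenerate case $n=2$ or $m=2$ falls outside the triangulation language, though your determinant computation still yields $q_0=0$ there, matching the paper's convention for the degenerate quiddity $(0,0)$. As for what each approach buys: the paper's is shorter, uniform in rows and columns, and exploits machinery already set up; yours is constructive and self-contained given Section~2, and in fact the $n$-gon you build, with $(v_0,v_1)=\left(\frac{a}{c},\frac{b}{d}\right)$, is exactly the one the paper only introduces later in Theorem~\ref{theprop}, so your proof anticipates the Farey-graph interpretation of Section~6 rather than the analytic one of Section~5.
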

\begin{proof}
The rows, resp. columns, of $T$ are antiperiodic solutions of an equation \eqref{Sro}
with $c_i=c_{i+n}=q_i$, resp. $c_i=c_{i+m}=q'_i$. 
It follows from Proposition~\ref{Known} that the coefficients are quiddities.
\end{proof}
\begin{lem}
The $2\times 2$ left upper block of $P$, 
satisfies
$$
\begin{array}{rcl}
q_0\,a_{0,0}&<&a_{0,1},\\[4pt]
q'_0\,a_{0,0}&<&a_{1,0}.
\end{array}
$$
\end{lem}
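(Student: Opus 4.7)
The plan is to read the two inequalities as statements about two specific entries of $T$ that lie immediately outside the positive block $P$, and then use $(n,m)$-antiperiodicity of $T$ to relate those entries to entries inside $P$.

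First, I would invoke the row recurrence from Lemma~6.1.4 at $(i,j)=(0,0)$, which gives
$$
a_{0,1}=q_0\,a_{0,0}-a_{0,-1}.
$$
Thus the inequality $q_0\,a_{0,0}<a_{0,1}$ is equivalent to $a_{0,-1}<0$. But $a_{0,-1}$ sits just to the left of the fundamental block $P$, and $(n,m)$-antiperiodicity of the tiling (row antiperiodicity) forces
$$
a_{0,-1}=-a_{0,n-1}.
$$
Since $a_{0,n-1}$ is an entry of $P$, it is a positive integer by hypothesis, so $a_{0,-1}<0$, which is exactly what was needed.

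The second inequality is completely symmetric: apply the column recurrence at $(i,j)=(0,0)$ to obtain $a_{1,0}=q'_0\,a_{0,0}-a_{-1,0}$, so $q'_0\,a_{0,0}<a_{1,0}$ reduces to $a_{-1,0}<0$. Column antiperiodicity gives $a_{-1,0}=-a_{m-1,0}$, and $a_{m-1,0}\in P$ is positive, whence $a_{-1,0}<0$.

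There is really no hidden difficulty here: the recurrences of Lemma~6.1.4 produce $a_{0,-1}$ and $a_{-1,0}$ as the ``defect'' between the two sides of the desired inequalities, and antiperiodicity identifies these defects (up to sign) with two boundary entries of $P$, which are positive by assumption. The only thing to be careful about is indexing — making sure that $a_{0,n-1}$ and $a_{m-1,0}$ genuinely belong to the chosen fundamental rectangle $P$ as defined in~(\ref{Ptile}), which they do.
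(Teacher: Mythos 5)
Your proposal is correct and follows the paper's own argument exactly: the paper also derives $a_{0,1}=q_0\,a_{0,0}-a_{0,-1}$ from the recurrence and concludes from antiperiodicity that $a_{0,-1}<0$ (and symmetrically $a_{-1,0}<0$). You merely make explicit the step the paper leaves implicit, namely $a_{0,-1}=-a_{0,n-1}$ and $a_{-1,0}=-a_{m-1,0}$ with $a_{0,n-1},a_{m-1,0}$ positive entries of $P$.
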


\begin{proof}
By antiperiodicity, $a_{0,-1}<0$.
One has from~(\ref{RecEq}): $a_{0,1}=q_0\,a_{0,0}-a_{0,-1}$, and similarly for~$q'_0$.
Hence the result.
\end{proof}

In other words, the elements of the matrix 
$$
\begin{pmatrix}
a_{0,0}&a_{0,1}\\[4pt]
a_{1,0}&a_{1,1}
\end{pmatrix}
=:
 \begin{pmatrix}
a&b\\[2pt]
c&d
\end{pmatrix}
$$
satisfy~(\ref{StrangeEq}).

Theorem~\ref{thethm} is proved.

\section{$\SL_2$-tilings and the Farey graph}

In this section, we give an interpretation of the entries $a_{i,j}$
of a doubly periodic $\SL_2$-tiling.
We follow the idea of Coxeter~\cite{Cox}
and consider $n$-gons in the classical Farey graph.

\subsection{The distance between two $n$-gons}

Consider a doubly periodic $\SL_2$-tiling $T=(a_{i,j})$ and 
the corresponding triple $(q,q', M)$ (see Theorem~\ref{thethm}). 
Our next goal is to give an explicit expression for the numbers $a_{i,j}$
similar to~(\ref{Frizaij}).

From the triple $(q,q', M)$
 we construct the unique $n$-gon $(v_0,v_1,\ldots,v_{n-1})$ 
 and the unique $m$-gon $(v'_0,v'_1,\ldots,v'_{m-1})$
 with the ``initial'' conditions:
$$
\textstyle
\left(v_0,v_1\right):=\left(\frac{a}{c},\,\frac{b}{d}\right),
\qquad
\left(v'_0,v'_{m-1}\right):=\left(\frac{1}{0},\,\frac{0}{1}\right),
$$
and with the quiddities $(q_0,\ldots,q_{n-1})$  and
$(q'_1,\ldots,q'_m)$, respectively.
Notice that the quiddity $q'$ is shifted cyclically.

 \begin{thm}
 \label{theprop}
The entries of the $\SL_2$-tiling $T=(a_{i,j})$ are given by
 $$
 a_{i,j}=d(v'_{i-1},v_j),
 $$
for all $0\leq i\leq m-1, \; 0\leq j\leq n-1$.
 \end{thm}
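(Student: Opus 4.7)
The strategy is to exhibit both sides as specializations of the same signed bilinear quantity in the numerators and denominators of the two Farey polygons, and then conclude by the uniqueness portion of the construction in the proof of Theorem \ref{thethm}.

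Concretely, I would fix integer representatives $v_j = a_j/b_j$ for the $n$-gon, taking $(a_0,b_0)=(a,c)$ and $(a_1,b_1)=(b,d)$, and representatives $v'_{i-1} = A'_i/B'_i$ for the $m$-gon, with signs chosen so that $(A'_0,B'_0)=(0,1)$ and $(A'_1,B'_1)=(-1,0)$ (which does represent $v'_{m-1}=0/1$ and $v'_0=1/0$). By the frieze/Hill dictionary of Theorem \ref{fareyquid} combined with Proposition \ref{Known}, both $(a_j)$ and $(b_j)$ satisfy $V_{j+1}=q_j V_j - V_{j-1}$, and, once the cyclic shift in the quiddity $(q'_1,\ldots,q'_m)$ is accounted for, both $(A'_i)$ and $(B'_i)$ satisfy $V'_{i+1}=q'_i V'_i - V'_{i-1}$.

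Now set $\tilde a_{i,j} := B'_i a_j - A'_i b_j$. The bilinearity of this expression together with the two Hill recurrences immediately yields the row recurrence $\tilde a_{i,j+1}=q_j\tilde a_{i,j}-\tilde a_{i,j-1}$ and the column recurrence $\tilde a_{i+1,j}=q'_i\tilde a_{i,j}-\tilde a_{i-1,j}$. A direct substitution shows that the $2\times 2$ block $(\tilde a_{i,j})_{0\le i,j\le 1}$ equals the matrix $M=\begin{pmatrix}a&b\\c&d\end{pmatrix}$. By the uniqueness of the construction in the proof of Theorem \ref{thethm}, the two recurrences plus the initial block determine the whole tiling, so $\tilde a_{i,j} = a_{i,j}$ for every $(i,j)$. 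In the positive rectangle $0\le i\le m-1$, $0\le j\le n-1$ one has $a_{i,j}>0$, hence
\[
a_{i,j} = |\tilde a_{i,j}| = |A'_i b_j - B'_i a_j| = d(v'_{i-1},v_j),
\]
which is the desired formula.

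The main obstacle in this plan is the bookkeeping of signs. The Farey distance is defined by an absolute value, while the verification of the Hill recurrences requires a signed expression; a wrong choice of signs at the two initial vertices of the $m$-gon would produce $\pm M$ in the initial block rather than $M$ itself, or, after iterating the recurrence, representatives of $v'_i$ whose ratios lie on the wrong side of the real line. Once the sign conventions above are pinned down and the cyclic shift between $q'$ and the $m$-gon's quiddity is correctly interpreted, the remainder of the argument is a short linear-algebra verification.
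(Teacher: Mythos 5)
Your proof is correct, and it takes a genuinely different route from the paper's. The paper argues geometrically: it uses the inequalities~(\ref{StrangeEq}) to show that $v'_{m-2}>v_0>\cdots>v_{n-1}>v'_{m-1}$, then embeds the two disjoint polygons into a single larger Farey $N$-gon via connectedness of the Farey graph, and reads off $a_{i,j}=d(v'_{i-1},v_j)$ as an instance of Coxeter's formula~(\ref{Frizaij}) for the enclosing frieze --- so the positive fundamental domain $P$ appears literally as a block of a bigger Coxeter--Conway frieze (Figure~\ref{HandO}). You instead verify the formula algebraically: $\tilde a_{i,j}=B'_ia_j-A'_ib_j$ inherits both Hill recurrences of~(\ref{RecEq}) by bilinearity, its initial $2\times2$ block is $M$, the uniqueness of the construction~(\ref{RecEq})--(\ref{Init}) forces $\tilde a_{i,j}=a_{i,j}$, and positivity on the rectangle (Lemma~\ref{posP}) removes the absolute value in the Farey distance. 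I checked your sign conventions and they do work: with $(A'_0,B'_0)=(0,1)$ and $(A'_1,B'_1)=(-1,0)$ one gets $\tilde a_{0,0}=a$, $\tilde a_{0,1}=b$, $\tilde a_{1,0}=c$, $\tilde a_{1,1}=d$, and the wrap step $(A'_2,B'_2)=q'_1(A'_1,B'_1)-(A'_0,B'_0)=-(q'_1,1)$ correctly represents $v'_1=q'_1$, the shifted quiddity assigning $q'_1$ to the vertex $v'_0=\infty$. Your route buys a self-contained verification that sidesteps the paper's rather terse embedding step (``two disjoint polygons belong to some $N$-gon'') and the interlacing argument entirely, at the cost of the sign bookkeeping you flag; the paper's route buys the conceptual picture that the tiling extends a frieze. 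One step you should spell out rather than attribute wholesale to Theorem~\ref{fareyquid} plus Proposition~\ref{Known}: the fact that lifts of a Farey polygon, normalized so that consecutive determinants equal $+1$, satisfy $w_{k+1}=q_kw_k-w_{k-1}$ with $q_k$ the quiddity at the middle vertex. Your $n$-gon is not normalized, so the dictionary of Section~2 does not apply verbatim; either invoke the $\SL_2(\Z)$-equivariance of the whole correspondence, or give the direct two-line computation from Lemma~\ref{DistLem}: expand $w_{k+1}$ in the unimodular basis $(w_k,w_{k-1})$ and evaluate the coefficients against the edge condition $d(v_k,v_{k+1})=1$ and the fan condition $d(v_{k-1},v_{k+1})=q_k$. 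With that made explicit, the argument is complete.
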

  \begin{proof}
  The main idea of the proof is to include the $n$-gon $v$ and the $m$-gon $v'$ into
  a bigger $N$-gon in a Farey graph, and then apply Eq.~(\ref{Frizaij}).
  In other words, we will include the fundamental domain~$P$ into a (bigger) frieze pattern.
  
  First, let us show that
  $$
  v'_{m-2}>v_0>v_1>\ldots>v_{n-1}>v'_{m-1}.
  $$
Indeed, the vertices $v'_{m-2}, v'_{m-1},v'_0$ are consecutive vertices of the $m$-gon $v'$.
By assumption, $v'_{m-1}=\frac{0}{1}$, so that the condition
$$
d(v'_{m-2}, v'_{m-1})=1
$$
implies
$v'_{m-2}=\frac{1}{\ell}$ for some $\ell$.
By Lemma~\ref{DistLem}, the distance $d(v'_{0}, v'_{m-2})$ coincides with the number
of triangles at the vertex $v'_{m-1}$ which is, by construction, equal to $q'_0$.
We finally have:
$$
d(v'_{0}, v'_{m-2})=\ell=q'_0,
$$
  so that  $v'_{m-2}=\frac{1}{q'_0}$.
  The inequality  $v'_{m-1}>v_0$ then follows from the second inequality~(\ref{StrangeEq}).
  
  It is well-known that the Farey graph is connected; see~\cite{HaWr}.
  Therefore, two disjoint polygons, $v$ and $v'$, belong to some $N$-gon
   that contain the $n$-gon $v$ and the $m$-gon $v'$.
  
  Theorem~\ref{theprop} then follows from formula~(\ref{Frizaij}).
  \end{proof}
  
 \begin{ex}
 Consider the tiling given in Figure \ref{exTiling}. 
 It corresponds to the following data
 $$
 q=(1,2,2,1,3), \qquad q'=(2,1,2,1), \qquad 
 M=\begin{pmatrix}2&5\\[4pt]
 7&18\end{pmatrix}.
 $$
 The associated $5$-gon and $4$-gon in the Farey graph are
 as follows: 
 $$
 v=\left(\frac{2}{7},\,\frac{5}{18},\,\frac{8}{29},\,\frac{11}{40},\,\frac{3}{11}\right),
  \quad \hbox{and}\quad
  v'=\left(\frac{1}{0},\,\frac{1}{1},\,\frac{1}{2},\,\frac{0}{1}\right), 
$$
respectively.
They can be included in an $11$-gon; see Figure~\ref{HandO}.
   \begin{figure}[!h]
\begin{center}
\includegraphics[width=14cm]{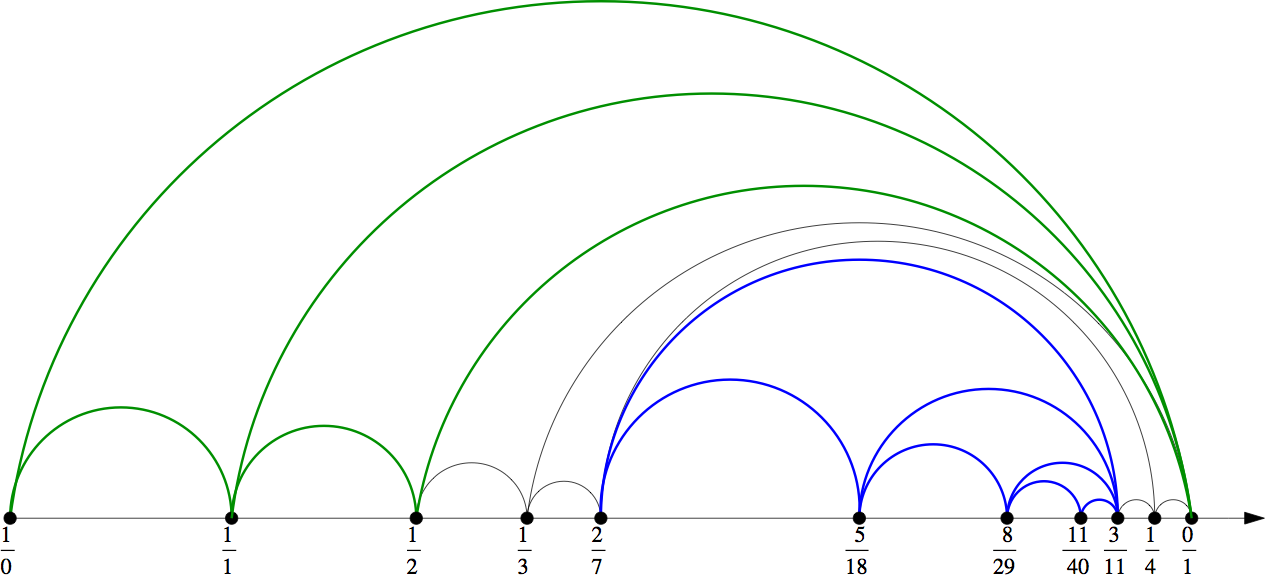}
\end{center}
\caption{The subgraph associated with the tiling in Figure \ref{exTiling}}
\label{HandO}
\end{figure}
\end{ex}

\bigskip
{\bf Acknowledgments}.  
We are grateful to Pierre de la Harpe for helpful comments.
S.~M-G. and V.~O. were partially supported by the PICS05974 ``PENTAFRIZ'' of CNRS;
S.T. was partially supported by the NSF grant DMS-1105442.



\begin{thebibliography}{99}


\bibitem{ARS}
I. Assem, C. Reutenauer, D. Smith, 
{\it Friezes},
Adv. Math. 225 (2010), no. 6, 3134--3165. 


\bibitem{BaMa}
K. Baur, R.J. Marsh,
{\it Frieze patterns for punctured discs}, 
J. Algebraic Combin. {\bf 30} (2009), no. 3, 34--379.

\bibitem{BeRe}
F. Bergeron, C. Reutenauer,
{\it $\SL_k$-Tiling of the Plane},
Illinois J. Math. 54 (2010), no. 1, 263--300.

\bibitem{BHJ}
C. Bessenrodt, T. Holm, P. Jorgensen,
{\it All $\SL_2$-tilings come from triangulations},
research report MFO.

\bibitem{CaCh}
P. Caldero, F. Chapoton, 
{\it Cluster algebras as Hall algebras of quiver representations},
Comment. Math. Helv.  {\bf 81}  (2006),   595--616.


\bibitem{CoCo}
J. H. Conway, H. S. M. Coxeter, 
{\it Triangulated polygons and frieze patterns},
Math. Gaz. {\bf 57} (1973), 87--94 and 175--183.

\bibitem{Cox}
H. S. M. Coxeter, 
{\it Frieze patterns},  Acta Arith.  {\bf 18}  (1971), 297--310.


\bibitem{FZ1}
S. Fomin, A. Zelevinsky, 
{\it Cluster algebras. I. Foundations. }
J. Amer. Math. Soc.  {\bf 15}  (2002),   497--529.

\bibitem{FZ3}
S. Fomin, A. Zelevinsky, 
{\it The Laurent phenomenon.}
Adv. in Appl. Math.  {\bf 28}  (2002),   119--144.

\bibitem{HaWr}
G. H. Hardy, E. M. Wright,  
An introduction to the theory of numbers. 
Sixth edition. Revised by D. R. Heath-Brown and J. H. Silverman. With a foreword by Andrew Wiles. 
Oxford University Press, Oxford, 2008, 621~pp.

\bibitem{Hen}
C.-S. Henry, 
{\it Coxeter friezes and triangulations of polygons},
Amer. Math. Monthly {\bf 120} (2013), 553--558. 

\bibitem{HJ}
T. Holm, P. Jorgensen,
{\it $\SL_2$-tilings and triangulations of the strip}. 
arXiv:1301.2456.

\bibitem{MOT}
S. Morier-Genoud, V. Ovsienko, S. Tabachnikov,
{\it 2-frieze patterns and the cluster structure of the space of polygons},
Ann. Inst. Fourier, 62, 3 (2012) 937-987.

\bibitem{MOST}
S. Morier-Genoud, V. Ovsienko, R. Schwartz, S. Tabachnikov,
{\it Linear difference equations, frieze patterns and combinatorial Gale transform},
arXiv:1309.3880.

\bibitem{OT}
V. Ovsienko, S. Tabachnikov,
{\it Coxeter's frieze patterns and discretization of the Virasoro orbit},
arXiv:1312.3021.

\bibitem{Pro}
J. Propp,
{\it The combinatorics of frieze patterns and Markoff numbers},
arXiv:math/0511633.

\bibitem{Sch}
R. Schwartz, The computer program ``Frieze!'',
 http://www.math.brown.edu/$\sim$res/Java/Frieze/Main.html.

\end{thebibliography}
\end{document}